\documentclass[12pt, reqno]{amsart}
\usepackage[margin=1in]{geometry}
\usepackage{amsthm, amsmath, amsfonts, amssymb, mathabx, tikz}
\usepackage{enumerate}
\usepackage{hyperref}

\usepackage{color}

\newtheorem{theorem}{Theorem}[section]
\newtheorem{proposition}[theorem]{Proposition}
\newtheorem{lemma}[theorem]{Lemma}

\theoremstyle{definition}
\newtheorem{definition}[theorem]{Definition}
\theoremstyle{remark}
\newtheorem{remark}{Remark}

\newcommand{\Z}[0]{\mathbb{Z}}
\newcommand{\R}[0]{\mathbb{R}}
\newcommand{\Q}[0]{\mathbb{Q}}

\newcommand{\T}[0]{\mathbb{T}}
\newcommand{\Om}[0]{\Omega}

\newcommand{\bs}[0]{\setminus}

\newcommand{\vep}[0]{\varepsilon}
\newcommand{\supp}[0]{\operatorname{supp}}
\newcommand{\lsm}[0]{\lesssim}
\newcommand{\lsa}[0]{\lessapprox}

\newcommand{\wh}[1]{\widehat{#1}}
\newcommand{\wc}[1]{\widecheck{#1}}
\newcommand{\mc}[1]{\mathcal{#1}}

\newcommand{\wt}[1]{\widetilde{#1}}

\newcommand{\nms}[1]{\| #1 \|}

\newcommand{\Qp}{\mathbb Q_p}
\newcommand{\cO}{\mathbb{Z}_p}
\newcommand{\1}{\mathbf 1}
\newcommand{\norm}[1]{\lVert#1\rVert}

\newcommand{\I}{\mathcal{I}}
\newcommand{\B}{\mathcal{B}}

\newcommand{\cN}{\mathcal{N}}

\title{An improved high-low method for the parabola}
\author{Zane Kun Li}
\author{Po-Lam Yung}

\address{Department of Mathematics, North Carolina State University, Raleigh, NC 27695
}
\email{zkli@ncsu.edu}

\address{Mathematical Sciences Institute, Australian National University, Canberra, ACT 2601 \& Department of Mathematics, The Chinese University of Hong Kong, Shatin, Hong Kong \& CNRS-ANU International Research Laboratory FAMSI (France Australia Mathematical Sciences and Interactions)}
\email{PoLam.Yung@anu.edu.au, plyung@math.cuhk.edu.hk}

\begin{document}
\begin{abstract}
We demonstrate a modification of the high-low method of Guth-Maldague-Wang \cite{GuthMaldagueWang2024} that yields a decoupling theorem which gives the sharp estimate for the discrete restriction constant for the parabola and also a diameter-free bound for the three-fold additive energy of rational points on the parabola and circle. These results were recently obtained in \cite{DengFanGuoGuoLuo2026} and \cite{CushmanDemeterWu2026}, respectively. The novel features in this argument
are an orthogonal expansion of $F^3$ and a new square function that also keeps the wavepackets discarded after pruning, resulting in new and more efficient high and low lemmas.
\end{abstract}
\maketitle
\section{Introduction}
The three-fold additive energy of a set $S$ in $\R^2$ is defined by
\[
 E_3(S)=\#\{(\xi_1,\ldots,\xi_6)\in S^6:
 \xi_1+\xi_2+\xi_3=\xi_4+\xi_5+\xi_6\}.
\]
In this paper we bound $E_3(S)$ for arbitrary finite subsets of rational points along a parabola or a circle.

\begin{theorem}
\label{thm:energy-curves}
Let $\mathcal C$ be either the rational parabola
$\{(t,t^2):t\in\mathbb Q\}$
or the rational circle
$\{(x,y)\in\mathbb Q^2:x^2+y^2=1\}.
$
Then for every finite set $S\subset \mathcal C$,
\begin{equation*}%
 E_3(S)\lesssim(\#S)^3\log(2+\#S).
\end{equation*}
\end{theorem}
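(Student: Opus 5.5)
The plan is to deduce Theorem~\ref{thm:energy-curves} from a decoupling-type estimate for the parabola whose constant depends only on the number of caps, not on the diameter of the frequency support. Such an estimate is the main output of the modified high-low method. First I reduce to the parabola and to integer points. For the circle, the stereographic parametrization $t\mapsto \bigl(\tfrac{1-t^2}{1+t^2},\tfrac{2t}{1+t^2}\bigr)$ realizes the rational circle, minus one point, as the image of $\mathbb{Q}\cup\{\infty\}$. The relation $\xi_1+\xi_2+\xi_3=\xi_4+\xi_5+\xi_6$ is preserved under invertible affine maps of $\R^2$. Instead of a direct rational change of variables, I would use the projective invariance of the problem: the circle and the parabola are projectively equivalent conics. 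The plan is to prove the estimate in a form, namely a decoupling inequality for exponential sums over points on a conic, that only uses curvature and separation at a scale. Since $S$ is finite, we may clear denominators. After a dilation $(x,y)\mapsto (Nx,N^2y)$ for the parabola, $S$ becomes a set of integer points $(n,n^2)$ with $n\in\Z$, but possibly of enormous diameter. This is why the bound must be diameter-free.

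Next, set $N=\#S$ and write $F(x)=\sum_{\xi\in S}e(\xi\cdot x)$, so that $E_3(S)=\lim_{R\to\infty}\frac{1}{|B_R|}\int_{B_R}|F|^6$. Rescale so that the points of $S$ lie on the unit parabola with minimal separation $\delta$, where $\delta$ is arbitrarily small. I then group points into caps not by arc length but dyadically by \emph{rank}. Order $S$ by the parameter $t$. At each dyadic level, split $S$ into consecutive blocks containing $2^{-k}N$ points. The key input from the body of the paper is a high-low iteration in which each step passes from blocks of size $M$ to blocks of size $M/2$. At each step, one uses the orthogonal expansion of $F^3$ and the enlarged square function that retains the pruned wavepackets. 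The resulting high and low lemmas lose only a factor $1+O(1/\log)$-type multiplicative constant, or an additive $O(1)$ in the exponent bookkeeping. Iterating over the $\log_2 N$ levels should give
\[
\int|F|^6 \lesssim \log N\cdot \Bigl(\sum_{\xi}\|e(\xi\cdot x)\|_{L^6}^2\Bigr)^3,
\]
which after normalizing is $E_3(S)\lesssim N^3\log(2+N)$. Lower-order terms come from the trivial diagonal count $E_3\ge N^3$.

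The main obstacle is making each scale's loss additive rather than multiplicative, so that $\log N$ scales cost $O(\log N)$ total rather than $C^{\log N}=N^{O(1)}$. This has to hold uniformly in the geometry, since consecutive blocks may have wildly different lengths. The first thing I would try is an $L^6$ trilinear-to-linear reduction. I would write $|F|^6=|F^3|^2$ and expand $F^3=\sum_{\tau}(\text{bi/trilinear pieces})$ orthogonally in frequency: products of three caps whose sums land in nearly disjoint regions by the transversality of parabolic sums. The goal is an exact Pythagorean identity plus an error that is controlled by the square function at the next scale. Summing these errors over scales gives a telescoping sum, which produces the single logarithm.
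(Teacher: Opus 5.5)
There is a genuine gap. The step that should make the bound diameter-free with a single logarithm is only asserted (``should give''), and you flag it yourself as the main obstacle without resolving it. The setting you choose, real variables with blocks of equal cardinality, is also one where the paper's tools are unavailable.

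\begin{itemize}
\item The orthogonal expansion of $F^3$ (Proposition~\ref{orthogonalexpansion}) uses that in $\cO$ any two points of distinct children of $I\in\I_k$ are at distance exactly $p^{-k}$. Lemma~\ref{lem:cubic-separation} (with $p\equiv 2\pmod 3$) then places $T_I(F)$ exactly on $\{|\Lambda|_p=p^{-2k}\}$. Over $\R$, adjacent blocks contain arbitrarily close points, so triple products at a coarse level have frequencies near $\Lambda=0$ and overlap every finer level.
\item The exact low lemma $P_{k+1}g_k=g_{k+1}$ and the orthogonality of the $h_k$ in Lemma~\ref{lem:high} need two facts. Pruning $G1_{|G|\le L}$ must preserve the Fourier support $\Xi_I$, which holds because $|G|$ is exactly constant on dual tiles in $\Qp^2$. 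Cross terms of distinct children must avoid the ball $B_{\rho_{k+1}}$. Neither holds over $\R$. The paper explicitly leaves the real-variable version open, possibly with additional logarithmic losses.
\item Blocks of unequal lengths at one level destroy the uniform scale structure every lemma relies on: the parallelograms $\Xi_I$, wave packets, and the single-scale bilinear estimate of Proposition~\ref{prop:bilinearrest}.
\item A per-level factor $1+O(1/\log)$ on low sets is exactly the older high-low mechanism. Its high sets $\{g_k\le(\text{number of levels})\,|h_k|\}$ cost the square of the number of levels in $\int_{\Omega_k}g_k^2$, which is where earlier arguments lost extra logarithms.
\end{itemize}

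In the paper the logarithm has a different origin. One transfers to $\Qp$ via \eqref{eq:torustoQp}: $p=5$ for the parabola (after an affine change of variables) and $p=11$ for the circle. The number of scales $N$ is taken so large that distinct points of $S$ lie in distinct terminal caps and $\xi_1+\xi_2+\xi_3=\xi_4+\xi_5+\xi_6$ is equivalent to the corresponding congruence modulo $p^{2N}$. Theorem~\ref{thm:main} then applies with constant $C_p\log(2+M)$, independent of $N$.

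The sum over scales costs nothing because $\|T_I(F)\|_2^2\lesssim_p m(A_I^2E_I-\sum_{J\in C(I)}A_J^2E_J)$ telescopes inside an induction on $N$. Telescoping removes the scale count; it does not produce the logarithm, contrary to your last sentence. The broad estimate is also free of scale losses, thanks to the compensated square function, the exact low lemma, the global High Lemma, and the disjoint stopping sets $\Omega_k$. The only logarithm is the $O(m)$ count of dyadic amplitudes $\alpha$, which is finite because $|f_{J_i}|\le 2^{m/2}$ when at most $2^m$ caps are nonzero.

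For the circle, projective equivalence of conics is of no use, since $E_3$ is invariant only under affine maps. The paper instead uses that for $p\equiv 3\pmod 4$ every rational point of the circle lies in $\Z_p^2$. These points fall into finitely many residue classes $\mathcal C_{a,b}$, each an affine image of the normalized arc with $\phi(t)=p^{-2}(\sqrt{1-(pt)^2}-1)$.
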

Throughout this paper, for nonnegative $A$ and $B$, we write $A \lsm B$ if there is an absolute constant $C$ such that $A \leq CB$.
If $A \leq C_{s}B$ for some constant $C_s$ depending on a parameter $s$, then we write
$A \lsm_{s} B$. We write $A \sim_{s} B$ if both $A\lsm_{s} B$ and $B\lsm_{s} A$ hold.

It was observed by Bombieri and Bourgain in \cite[Theorem 1]{BombieriBourgain2015} (see also \cite[Proposition 2.15]{BourgainDemeter2015}) as a consequence of the Szemer\'edi-Trotter Theorem that $E_{3}(S) \lsm_{\vep} (\# S)^{7/2 + \vep}$. See also \cite{Mudgal2023} for work on removing this $\varepsilon$ loss and generalizing this estimate to $E_{k}(S)$ for $k \geq 3$.
In the case of the parabola, the necessity of the logarithm can be seen by taking $S = \{(n, n^2): 1 \leq n \leq M\}$ and applying the main result of Rogovskaya \cite{Rogovskaya1986} or Blomer and Br\"{u}dern \cite{BlomerBrudern2010} %
since in this case $E_{3}(S)$ is just the quadratic Vinogradov mean value.
For the circle, it has been cautiously suggested that the logarithmic factor may be unnecessary; see for example, \cite[p.~710]{KrishnapurKurlbergWigman2013}.

Recently, Cushman, Demeter, and
Wu used an interlacing method to prove the bound $E_{3}(S) \lsm_{\vep} (\# S)^{3 + \vep}$ for arbitrary finite sets
on strictly convex real curves~\cite{CushmanDemeterWu2026}, which has since been refined to obtain the first diameter-free decoupling bound for the parabola over the reals in \cite{CushmanDemeterWu2026b}. Our method does not use interlacing, and can hence work on fields such as $\Q_p$ where an ordering is absent.
We consider only rational points along curves in this paper because then we have convenient access to Fourier analysis tools over the local field $\mathbb Q_p$. %

The proof of the diameter-free bound in Theorem \ref{thm:energy-curves} proceeds via an exponential sum estimate, often referred to as discrete restriction. Write
$\mathbb T=\mathbb R/\mathbb Z$ and $e(t)=\exp(2\pi i t)$.

\begin{theorem}
\label{thm:weighted-curves}
Let the curve $\mathcal C$ and the point set $S$ be as in Theorem~\ref{thm:energy-curves}. If $D$ is
any positive integer such that $S \subset D^{-1} \mathbb Z^2$, then
\begin{equation}\label{eq:curve-weighted}
 \int_{\mathbb T^2}
 |\sum_{\xi\in S}a_\xi
 e((D\xi)\cdot x)|^6\,dx
 \lesssim\log(2+\#S)
 (\sum_{\xi\in S}|a_\xi|^2)^3
\end{equation}
for every choice of complex coefficients $(a_\xi)_{\xi\in S}$.
\end{theorem}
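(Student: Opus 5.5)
We will deduce Theorem~\ref{thm:weighted-curves} from a $p$-adic decoupling inequality for the parabola, proved by the modified high-low method of the abstract. The first step is to reduce to the parabola. For the circle, stereographic projection from $(0,1)$ sends $t\in\Q$ to $\bigl(\tfrac{2t}{1+t^2},\tfrac{t^2-1}{1+t^2}\bigr)$. Combined with the Gaussian-integer parametrization, this expresses rational points on the circle through a rational linear image of points on a parabola after a projective change of coordinates. Rather than relying on this directly, we will run the same Fourier argument over $\Q_p$ for a non-degenerate conic. For such a conic, $p$-adic caps at scale $p^{-k}$ are again contained in parallelograms of dimensions $p^{-k}\times p^{-2k}$, which is all the high-low argument uses.

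For the parabola itself, we embed the exponential sum into $\Q_p^2$. Choose a prime $p$ coprime to nothing in particular. Since $S$ is finite, the points $t_\xi$ are distinct rationals. After an affine change $t\mapsto t+c$, which preserves the parabola up to a linear map of $\R^2$ and so preserves the $L^6(\T^2)$ norm through the induced lattice map, we may assume all $t_\xi\in\Z_p$. The integral over $\T^2$ of $|\sum a_\xi e(D\xi\cdot x)|^6$ counts weighted solutions of $\xi_1+\xi_2+\xi_3=\xi_4+\xi_5+\xi_6$. This count is field-independent, so it equals the corresponding integral over $\Z_p^2$ of $|\sum a_\xi \chi(\xi\cdot x)|^6$ with the additive character $\chi$ of $\Q_p$. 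Here we rescale $\xi$ by a power of $p$ so that the relevant frequencies separate. Choosing $N=p^n$ large enough that the $t_\xi$ lie in distinct residue classes mod $p^n$, each $\xi$ sits in its own $N^{-1}$-cap of the $p$-adic parabola.

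The core step is the decoupling estimate
\[
\|F\|_{L^6(\Q_p^2)}^6 \lesssim \log(2+\#S)\Bigl(\sum_\theta \|F_\theta\|_{L^6}^2\Bigr)^3,
\]
and its dependence must be on $\#S$, not on $N$; this is the point of the theorem. We will prove it by high-low induction on scales $p^{-k}$, $k=0,\dots,n$. Following the abstract, we expand $F^3=\sum_{\theta_1,\theta_2,\theta_3}F_{\theta_1}F_{\theta_2}F_{\theta_3}$ orthogonally according to the frequency support at each scale. We use the square function $g_k=\sum_{\tau:\,|\tau|=p^{-k}}|F_\tau|^2$ without pruning wavepackets, and ultrathe discarded wavepackets are kept. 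The non-archimedean geometry makes the high/low splitting exact: $|F_\tau|^2$ has Fourier support in a cap-sized parallelogram, and $g_{k+1}-\text{(low part of } g_k)$ is Fourier supported in a high annulus. The high lemma then follows from $L^2$ orthogonality of the pieces over distinct $p^{-k-1}$ parallelograms, which in the $p$-adic setting are finitely overlapping, indeed disjoint.

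The key point is that a scale $k$ contributes a nontrivial amount only when some cap $\tau$ at scale $p^{-k}$ splits, that is, when it contains points of $S$ from at least two children. There are at most $\#S-1$ splitting events in the tree of $p$-adic balls, and the relevant scales number $O(\log\#S)$ in an appropriate dyadic counting of branching. At non-splitting scales the low lemma gives $g_{k+1}=g_k$ exactly and costs nothing. This is where we expect the main obstacle: showing that the accumulated constant is $\lesssim\log\#S$ and not $\#\{\text{splitting scales}\}$, which could be as large as $\#S$. Our first attempt will be to weight each scale by the fraction of the mass $\sum|a_\xi|^2$ that splits there. We will then sum the high-lemma losses telescopically, so that they total a harmonic-type sum $\sum 1/j$ over the branching. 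Finally, taking $F_\theta=a_\xi\chi(\xi\cdot x)\1_{\Z_p^2}$ gives $\|F_\theta\|_6^2=|a_\xi|^2$, which yields \eqref{eq:curve-weighted}.
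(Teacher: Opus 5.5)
There is a genuine gap at the core step. You never give a mechanism that makes the constant depend on $\#S$ rather than on the number of scales, and you acknowledge this yourself by leaving the ``main obstacle'' to an unexecuted ``first attempt'' (weighting scales by split mass, a harmonic sum over branchings). Counting splitting scales cannot work as stated: for $t_j=p^j$, $1\le j\le M$, each of $M-1$ consecutive scales splits, so any fixed loss per splitting scale already gives $M$.

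The missing idea is to make the sum over scales exactly lossless and to put the logarithm elsewhere. The paper writes $\|F\|_6^6=\|F^3\|_2^2$ and expands $F^3=\sum_{k<N}\sum_{I\in\I_k}T_I(F)+\sum_{K}F_K^3$, where $T_I(F)=F_I^3-\sum_{J\in C(I)}F_J^3$. These pieces have pairwise disjoint Fourier supports (Proposition~\ref{orthogonalexpansion}) because $|\Lambda(\sigma,\tau)|_p=\max_{i\ne j}|\xi_i-\xi_j|_p^2$ (Lemma~\ref{lem:cubic-separation}). That identity rests on $|r^2+rs+s^2|_p=\max(|r|_p,|s|_p)^2$, hence on $p\equiv 2\pmod 3$, so $p$ cannot be arbitrary. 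Next, the paper normalizes $f_J=A_J^{-1/2}F_J$ with $A_J=\sum_{K\subset J}\|F_K\|_\infty^2$ and inducts on scales to handle the narrow part. This gives $\|T_I(F)\|_2^2\lesssim_p m\,(A_I^2E_I-\sum_{J\in C(I)}A_J^2E_J)$, which telescopes over all scales at no cost. The only logarithm is the number of dyadic amplitude levels $\alpha$ of $|f_{J_1}f_{J_2}f_{J_3}|$ on the broad set. That number is $O(\log M)$ because $|f_{J_i}|\le M^{1/2}$ under this normalization, and each level costs only $O_p(1)$.

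Your chosen target and tools also do not fit this scheme. By H\"older, $(\sum_\theta\|F_\theta\|_6^2)^3\le(\sum_\theta\|F_\theta\|_\infty^2)^2\sum_\theta\|F_\theta\|_2^2$, so the $\ell^2L^6$ inequality you aim for is at least as strong as Theorem~\ref{thm:main}. The paper proves only the mixed-norm form, and its $\|F_K\|_\infty^2$ normalization is exactly what supplies the pointwise bound $M^{1/2}$ that caps the amplitude levels.

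Likewise, a square function $\sum_\tau|F_\tau|^2$ without pruning gives no usable high lemma. There $\|h_k\|_2^2$ is controlled only by $L^4$ norms of the caps, and passing to $L^2$ norms needs the cap $|f_{k,I}|\le L$ with $L\sim_p\alpha^{-1}$. The paper prunes but keeps the discarded pieces $e_{k,I}$ inside $g_k$. As a result, the low lemma is the exact identity $P_{k+1}g_k=g_{k+1}$ (Lemma~\ref{lem:low}), and the pruning error obeys $|f_{J_i}-f_{k+1,J_i}|\le g_{k+1}/L$. The stopping sets $\Omega_k=\{g_k>2g_N\}\cap\bigcap_{k<j<N}\{g_j\le 2g_N\}$ then combine with $\|\sum_k h_k\|_2^2=\sum_k\|h_k\|_2^2\lesssim_p L^2\|f\|_2^2$ (Lemma~\ref{lem:high}) to sum all scales at once.

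Two smaller points. First, a translation $t\mapsto t+c$ cannot move all $t_\xi$ into $\cO$ when their $p$-adic valuations differ; you need a dilation $(x,y)\mapsto(p^ax,p^{2a}y)$. Second, for the circle you also need $p\equiv 3\pmod 4$ (the paper uses $p=11$). Otherwise $x^2+y^2=1$ is isotropic over $\Qp$, its rational points are $p$-adically unbounded, and it is not a bounded union of affine images of normalized arcs.
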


Theorem \ref{thm:energy-curves} follows from Theorem~\ref{thm:weighted-curves} by setting all $a_{\xi} = 1$ and noting that the left hand side of \eqref{eq:curve-weighted} reduces to $E_3(S)$.
The special set $S = \{(n, n^2): 1 \leq n \leq M\}$ has received much attention
due to connections with Strichartz estimates for the Schr\"{o}dinger equation
on the torus. Let $K(M)$ be the smallest constant such that 
\begin{align}\label{discres}
\|\sum_{n = 1}^{M}a_{n}e(nx + n^2 t)\|_{L^{6}(\T^2)} \leq K(M)(\sum_{n = 1}^{M}|a_n|^{2})^{1/2}
\end{align}
for all choices of complex numbers $\{a_n\}$. Bourgain in \cite{Bourgain1993} showed that 
$(\log M)^{1/6} \lsm K(M) \lsm \exp(O(\log M/\log\log M))$. The Bourgain-Demeter
paraboloid decoupling theorem \cite{BourgainDemeter2015} showed $K(M)\lsm_{\vep} M^{\vep}$. Subsequent improvements were obtained using the high-low method in \cite{GuthMaldagueWang2024} with
$K(M) \lsm (\log M)^{O(1)}$ and \cite{GuoLiYung2023} with $K(M) \lsm_{\vep} (\log M)^{2 + \vep}$.
The single logarithm obtained in \eqref{eq:curve-weighted} and also recently by Deng, Fan, Guo, Guo, and Luo in
\cite{DengFanGuoGuoLuo2026} matches Bourgain's initial lower bound, thus proving
a sharp estimate for $K(M)$. Bourgain in \cite{Bourgain1993} also considered the analogue of \eqref{discres} in the case of the paraboloid in $\R^{d + 1}$ for $d \geq 1$. Sharp estimates for the $d = 2$ case were recently shown by Herr and Kwak in \cite{HerrKwak2024} using methods from incidence geometry. Our proof of the sharp Strichartz estimate in the $d = 1$ case is quite different. 
It would be interesting to consider whether methods from the proof of Theorem
\ref{thm:main} below could give a different proof of the $d = 2$ case or whether
Herr and Kwak's proof can be adapted to our current setting.

Theorem \ref{thm:weighted-curves} is an immediate consequence of the Fourier decoupling theorem below (Theorem \ref{thm:main}) whose proof is based on the \text{high-low method}.
The high-low method originated in a paper by Vinh \cite{Vinh2011}, who proved a Szemer\'edi-Trotter-type theorem in finite fields. Vinh's argument relied
on two distinct contributions: a contribution from the zero frequency (the low part) and a contribution from the nonzero frequencies (the high part). This method was subsequently brought into harmonic analysis
by Guth, Solomon, and Wang in \cite{GuthSolomonWang2019} and Guth, Wang, and Zhang in \cite{GuthWangZhang2020}. Since then
the high-low method has played an important role in harmonic analysis, incidence geometry, and combinatorics. See \cite{CohenPohoataZakharov2023, GanGuoGuthHarrisMaldagueWang2022, GanMaldagueOh2025, GuthMaldague2026, Maldague2022, RenWang2023} for a sample of numerous other developments enabled by the high-low method.

The high-low method was first introduced into Fourier decoupling by Guth, Maldague, and Wang in \cite{GuthMaldagueWang2024}.
The method was later adapted to work over $\Q_p$ in \cite{GuoLiYung2023} by Guo and the two authors of this manuscript; see also \cite{LiLiYung2023} for some heuristics. 
We refer the reader to \cite{GuoLiYung2023,Li2024} for the relevant aspects of Fourier analysis on $\mathbb Q_p$. In particular, we normalize Haar measure $dx$ on $\Qp^2$ by $|\cO^2|=1$, and fix an additive
character $\chi \colon \mathbb Q_p \to \mathbb C^{\times}$ that is trivial ($\equiv 1$) on $\cO$ but not trivial on
$p^{-1}\cO$. The Fourier transform is then defined by
\[
 \widehat f(\xi)=\int_{\mathbb Q_p^2}
 f(x)\overline{\chi(x\cdot\xi)}\,dx.
\]
See \cite[Section 2]{GuoLiYung2023} for details
on tools such as the locally constant property, uncertainty principle, and wavepacket decomposition
over $\Q_p$ that we will often make use of here without proof.

\subsection{Setup and statement of main theorem (Theorem \ref{thm:main})}

\begin{definition}\label{def:good-arc}
Let $p$ be an odd prime with $p \equiv 2 \pmod{3}$.
A normalized parabolic arc is a graph
\[
 \Gamma=\{(t,\phi(t)):t\in\cO\},
\]
where $\phi:\cO\to\cO$ is given by a convergent power series
\begin{equation} \label{eq:powerseries}
\phi(t) = \sum_{n=2}^{\infty} c_n t^n \quad \text{for all $t \in \cO$},
\end{equation}
with $c_n \in \cO$ for all $n$, and $\kappa := c_2$ satisfies $|\kappa|_p = 1$ while $|c_n|_p < 1$ for all $n \geq 3$.
Note that the power series of $\phi(t)$ in \eqref{eq:powerseries} is convergent for all $t \in \cO$ if and only if $|c_n|_p \to 0$ as $n \to \infty$.
\end{definition}

Condition~\eqref{eq:powerseries} with the conditions on the coefficients $c_n$ says that the reduction of 
the graph modulo $p$ is a quadratic graph with nonzero quadratic
coefficient, that is,
\begin{equation}\label{eq:normal-form}
 \phi(t)\equiv\kappa t^2\pmod p
\end{equation}
coefficientwise.
The parabola is the basic example, with
$\phi(t)=t^2$. For the circle, one would like to consider the function $\sqrt{1-t^2}$, but this function only admits a convergent power series expansion for $|t|_p \leq p^{-1}$ (since if $p$ is an odd prime, any $p$-adic integer that is $\equiv 1 \pmod p$ has a square root that is also $\equiv 1 \pmod p$ and defined by a convergent power series). Thus one should consider instead 
\begin{equation}\label{eq:circlephi}
    \phi(t) = p^{-2} (\sqrt{1-(p t)^2} - 1),
\end{equation} which has 
a convergent power series 
$-\frac{1}{2} t^2 + \sum_{n \geq 2} \binom{1/2}{n} (-1)^n p^{2n-2} t^{2n}$
on $\cO$. In this case $\Gamma = \{(t,\phi(t)) \colon t \in \cO\}$ is the set of all
points $(X,Y) \in \Z_p^2$ that lie on the ellipse $(pX)^2 + (1 + p^2 Y)^2 = 1$. 
When $p \equiv 3 \pmod 4$, i.e. when $-1$ is not a quadratic residue mod $p$, every rational point on the unit circle has $p$-adic integer coordinates. In that case, let $\mathcal{C}_{0,1}$ be the set of rational points $(x,y) \in \Q^2$ on the unit circle $x^2+y^2=1$ with $x \equiv 0 \pmod p$ and $y \equiv 1 \pmod p$ when considered as elements of $\Z_p$. For $(x,y) \in \mathcal{C}_{0,1}$, one could write $x = pX$ for some $X \in \Z_p$, which forces (via the equation of the circle) that $y = 1 + p^2 Y$ for some $Y \in \Z_p$, and under the invertible affine change of variables $(x,y) \mapsto (X,Y)$, the set $\mathcal{C}_{0,1}$ gets carried into the ellipse $\Gamma$. An extension of this argument shows that the rational circle can be divided into a finite number
of pieces, each of which becomes part of a normalized parabolic arc after an invertible affine change of coordinates; see Section~\ref{sect:circle}. 

It will be important that the class of normalized parabolic arcs is invariant under parabolic rescaling; see Proposition~\ref{prop:rescalinggeom}.

Let $N \geq 0$ and let $F  \colon \Q_p^2 \to \mathbb C$ be Fourier supported in the $p^{-2N}$ neighborhood of a normalized parabolic arc $\Gamma$ in $\Q_p^2$. In other words, the Fourier support of $F$ is contained in 
\[
 \{(\xi,\eta):\xi\in\cO,
                  |\eta-\phi(\xi)|_p\leq p^{-2N}\}.
\]

For $k \geq 0$, let $\I_k$ be a partition of $\cO$ into $p$-adic intervals of length $p^{-k}$. For $I \in \I_k$, write $F_I$ for the Fourier localization of $F$ to $I \times \Q_p$:
\begin{align*}%
 \widehat{F_I}(\xi,\eta)=\1_I(\xi)\widehat F(\xi,\eta).
\end{align*}
We write $K$ for the terminal intervals in $\I_N$ so that $F = \sum_{K \in \I_N} F_K$. The main decoupling theorem in this paper is as follows.

\begin{theorem}\label{thm:main}
Let $p$ be an odd prime with $p\equiv2\pmod3$. Then there is a
constant $C_p$, uniform over normalized parabolic arcs $\Gamma$, with the
following property. Suppose for some nonnegative integer $N$,
an $L^2$ function $F \colon \Q_p^2 \to \mathbb C$ is Fourier supported in the $p^{-2N}$ neighborhood of a normalized parabolic arc $\Gamma$. Assume additionally that $F_K$ is not identically zero for at most $M$ terminal caps $K \in\I_N$. Then
\begin{equation}\label{eq:main}
 \int_{\mathbb Q_p^2}|F|^6
 \leq C_p\log(2+M)
 (\sum_{K\in\I_N}\norm{F_K}_\infty^2)^2
 (\sum_{K\in\I_N}\norm{F_K}_2^2).
\end{equation}
\end{theorem}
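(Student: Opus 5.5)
We run a $p$-adic high-low induction on scales $k=0,1,\dots,N$, in the style of \cite{GuthMaldagueWang2024,GuoLiYung2023}. The aim is to lose only a constant at each scale where the frequency support genuinely branches. Scales at which the support does not split cost nothing after parabolic rescaling (Proposition~\ref{prop:rescalinggeom}). At most $\log_p M$ scales can branch, which is where the factor $\log(2+M)$ comes from.

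For $1\le k\le N$ we introduce the square functions
\[
g_k=\sum_{I\in\I_k}|F_I|^2 .
\]
We split $|F|^6=|F|^2\cdot|F|^4$ and expand $F^3=\sum_{I_1,I_2,I_3}F_{I_1}F_{I_2}F_{I_3}$ orthogonally. We group the triples by scale: at scale $k$ we take those triples whose indices agree at scale $k-1$ but not all at scale $k$. Because $p\equiv2\pmod3$, cube roots of unity are absent mod $p$. We intend to use this to show the following: for such triples, the Fourier support of $F_{I_1}F_{I_2}F_{I_3}$ is determined, up to $O_p(1)$ overlap, by the sum $\xi_1+\xi_2+\xi_3$ at scale $k$. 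This gives an $L^2$ orthogonality
\[
\int|F|^6\lesssim_p\sum_k\int\Bigl|\sum_{\text{scale-}k\text{ triples}}F_{I_1}F_{I_2}F_{I_3}\Bigr|^2 ,
\]
with no loss. Each level-$k$ piece is then bounded pointwise by a product $g_{k-1}\,g_k^{1/2}\cdots$ of square functions.

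At each scale we decompose $|F_I|^2$ into low and high Fourier frequencies, with the cutoff at the dual of the scale-$(k-1)$ cap geometry. This gives two estimates.

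The \textbf{low lemma} is $g_k^{\mathrm{low}}\lesssim g_{k-1}$ up to averaging. This is exact over $\Q_p$ via the locally constant property.

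The \textbf{high lemma} uses the fact that the high parts of $|F_I|^2$ have Fourier supports in annuli that are disjoint across $I$ after we group in the $p^{-k}$ structure. So by Plancherel,
\[
\int |g_k^{\mathrm{high}}|^2\lesssim\sum_{I\in\I_k}\int|F_I|^4 .
\]
Then $\int|F_I|^4\le\|F_I\|_\infty^2\|F_I\|_2^2$, and by rescaling we can bound $\|F_I\|_\infty^2\le(\sum_{K\subset I}\|F_K\|_\infty)^2$. The key point is to keep all wavepackets rather than pruning them. Pruning is what forced the extra logarithms in earlier work. So we use a square function built from all wavepackets together with Cauchy--Schwarz, and this should yield $\|F_I\|_\infty^2\lesssim \#\{K\subset I\}\cdot\sup$. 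That bound is too crude, so we will instead pair the two $g$ factors against the $L^\infty$ norms only through $\sum_K\|F_K\|_\infty^2$.

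Summing the scale-$k$ contributions, each non-branching scale is absorbed by rescaling, and each branching scale contributes
\[
\lesssim_p\Bigl(\sum_K\|F_K\|_\infty^2\Bigr)^2\sum_K\|F_K\|_2^2 .
\]

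We expect the main obstacle to be making the orthogonality of the cubic expansion exact and uniform in $\Gamma$ when the higher-order coefficients $c_n$ are present. To handle this, we will first try to reduce mod $p$ to the pure quadratic $\kappa t^2$ using \eqref{eq:normal-form}, and then check that the relevant sum map $(t_1,t_2,t_3)\mapsto(\sum t_i,\sum\phi(t_i))$ separates triples via Hensel's lemma. The condition $p\equiv 2\pmod 3$ is used here to rule out degenerate triples with $t_1+t_2+t_3$ structure mod $p$.
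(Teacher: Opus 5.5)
There is a genuine gap: the proposal has no mechanism for summing over scales without loss, and the bound it gives on the number of lossy scales is false.

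First, the accounting of the logarithm is wrong. Suppose the nonzero terminal caps lie over $\xi\in\{0,1,p,\dots,p^{M-2}\}$, with $N\ge M-1$. Then for every $0\le k\le M-2$ the cap $p^k\cO\in\I_k$ has two nonzero children, $p^{k+1}\cO$ and $p^k+p^{k+1}\cO$. So $M-1$ scales branch, not $\log_p M$. Charging a full $(\sum_K\norm{F_K}_\infty^2)^2\sum_K\norm{F_K}_2^2$ to each branching scale therefore gives a bound of order $M$.

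You also never explain how a single scale-$k$ piece is bounded by that quantity without loss. That piece involves $\int|F_{J_1}F_{J_2}F_{J_3}|^2$ over the children $J_i$, which sees every finer scale.

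The paper charges nothing per scale. It normalizes $f_J=A_J^{-1/2}F_J$ and proves $\int|f_{J_1}f_{J_2}f_{J_3}|^2\le(\frac{C_p}{6p^3}+c_p)\,m\sum_i\norm{f_{J_i}}_2^2$. The narrow part follows from the induction hypothesis on $N$ after rescaling; the threshold $6p^3$ in the broad set supplies the factor $1/(6p^3)$ that lets $C_p$ close. The broad part follows from high--low. The weights then telescope exactly,
\[
\sum_{(J_1,J_2,J_3)\in C_3(I)}A_{J_1}A_{J_2}E_{J_3}=A_I^2E_I-\sum_{J\in C(I)}A_J^2E_J,
\]
so under $\sum_K\norm{F_K}_\infty^2=1$ the sum over all non-terminal $I$ is at most $\norm{F}_2^2$. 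The factor $\log(2+M)$ enters only through the $O(m)$ dyadic amplitude levels $\alpha$ with $U_\alpha\neq\emptyset$, since $|f_{J_i}|\le 2^{m/2}$ when $M\le 2^m$.

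Second, abandoning pruning discards exactly what makes the high lemma useful. Without it, $\sum_I\norm{F_I}_4^4$ is controlled only through $\norm{F_I}_\infty^2$, which can be as large as $\#\{K\subset I\}\sum_K\norm{F_K}_\infty^2$, as you concede. ``Pairing the $g$ factors against $\sum_K\norm{F_K}_\infty^2$'' is not an argument.

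In the paper one fixes $\alpha$ and prunes at height $L=24p/\alpha$. The high lemma then gives $\norm{H}_2^2\le p^2L^2\sum_i\norm{f_{J_i}}_2^2$. The factor $L^2\sim_p\alpha^{-2}$ cancels the $\alpha^2$ in $\alpha^6|U_\alpha\cap\Omega_k|\lesssim_p\alpha^2\int_{\Omega_k}g_k^2$. That bound comes from $|f_{J_3}|^2\lesssim_p\alpha^2$ on $U_\alpha$ together with bilinear restriction for $f_{k+1,J_1}f_{k+1,J_2}$.

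The new ingredient is not avoiding pruning but keeping the discarded pieces in the compensated square function $g_k=\sum_{I\in\I_k}|f_{k+1,I}|^2+\sum_{j>k}r_j$. This gives:
\begin{enumerate}
\item the pointwise error bound $|f_{J_i}-f_{k+1,J_i}|\le g_{k+1}/L$;
\item an exact low lemma $P_{k+1}g_k=g_{k+1}$ even after pruning;
\item orthogonality of the $h_k$ across all scales;
\item stopping sets $\Omega_k=\{g_k>2g_N\}\cap\bigcap_{k<j<N}\{g_j\le 2g_N\}$ on which $g_k<2P_kH$.
\end{enumerate}
Your proposal has no broad/narrow split, no level sets, no stopping sets and no bilinear step, so the high and low lemmas are never combined into an estimate.

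By contrast, the cubic orthogonality you flag as the main obstacle is the easy part. Lemma~\ref{lem:cubic-separation} gives $|\Lambda(\sigma,\tau)|_p=\max_{i\ne j}|\xi_i-\xi_j|_p^2$ directly from the Taylor expansion and $|r^2+rs+s^2|_p=\max(|r|_p,|s|_p)^2$. Together with $\sigma\in 3I$, this yields exact disjointness.
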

A reader interested only in the case of Theorem \ref{thm:main} for the parabola may take $p = 5$ and $\phi(t) = t^2$ throughout.

It is easy to extend Theorem~\ref{thm:main} to affine images of normalized parabolic arcs.
This will be necessary when we apply Theorem~\ref{thm:main} to subsets of the rational parabola whose coordinates may not be $p$-adic integers, and to ellipses that arise naturally by rescaling circles.

Theorem~\ref{thm:main} will be applied with $p = 5$ for the parabola and $p = 11$ for all rational circles to give Theorem~\ref{thm:weighted-curves}. In fact,
\begin{equation} \label{eq:torustoQp}
 \int_{\mathbb T^2}
 |\sum_{\xi\in S}a_\xi
 e((D\xi)\cdot x)|^6\,dx
 = \frac{1}{|B_R|} \int_{\Q_p^2} | \sum_{\xi \in S} a_{\xi} \chi(\xi \cdot x) 1_{B_R}(x) |^6\, dx
\end{equation}
where $B_R$ is a sufficiently large ball of radius $R$ in $\Q_p^2$ centered at the origin. It suffices to choose $R = p^{2N}$ with $N$ large enough so that $|\xi - \xi'|_p > p^{-N}$ for every distinct pair $\xi, \xi' \in S$, and so that
\[
p^{2N} > \max_{\substack{(\xi_1,\dots,\xi_6) \in S^6 \\ \xi_1+\xi_2+\xi_3-\xi_4-\xi_5-\xi_6 \ne 0}} |\xi_1+\xi_2+\xi_3-\xi_4-\xi_5-\xi_6|_p^{-1},
\]
because then for any $(\xi_1, \dots, \xi_6) \in S^6$, 
\[
\xi_1+\xi_2+\xi_3=\xi_4+\xi_5+\xi_6 \Longleftrightarrow \xi_1+\xi_2+\xi_3 \equiv \xi_4+\xi_5+\xi_6 \pmod{p^{2N}}.
\]
The right hand integral in \eqref{eq:torustoQp} can be bounded by Theorem~\ref{thm:main}. 
Note that the bound on the right hand side of \eqref{eq:main} depends only on the number $M$ of nonzero terminal pieces. This improved bound is what makes it possible to establish the diameter-free bounds in Theorem~\ref{thm:energy-curves}.

It seems plausible that with non-trivial additional effort, the methods for proving Theorem \ref{thm:main} will also work over $\R$, with possibly additional log losses arising from wavepacket decompositions over $\R$. This may in turn allow one to establish a non-sharp variant of Theorem \ref{thm:energy-curves} for arbitrary finite subsets along convex real curves.

\subsection{Sketch of proof} Below we sketch the proof of Theorem~\ref{thm:main}, highlighting new ideas not present in the existing literature.

Recall that we denote the terminal frequency caps by $K$. For $0 \leq k < N$, each $I \in \I_k$ has $p$ immediate children. The set of immediate children of $I$ is denoted $C(I)$, and elements of $C(I)$ will usually be denoted $J$. We also write $\rho_k$ for $p^{-k}$, the common length of the intervals in $\I_k$. The (finite) residue field $\Z_p / p \Z_p$ will be denoted $\mathbb{F}_p$.

\subsubsection{Orthogonal expansion of $F^3$} \label{sec:cubic}

For $I\in\I_k$, $k<N$, define
\begin{align*}%
 T_I(F)=F_I^3-\sum_{J\in C(I)}F_J^3.
\end{align*}
Successively expanding each $F_I^3$ gives the algebraic identity
\begin{equation*} 
 F^3=\sum_{k=0}^{N-1}\sum_{I\in\I_k}T_I(F)
       +\sum_{K\in\I_N}F_K^3.
\end{equation*}
It turns out that since $\widehat F$ is supported near a normalized parabolic arc, when $p$ is an odd prime with $p \equiv 2 \pmod 3$, the pieces on the right have disjoint Fourier supports. In fact, $p \equiv 2 \pmod 3$ is only used to derive the identity 
\[
 |r^2+rs+s^2|_p=\max(|r|_p,|s|_p)^2,
\]
which plays a crucial role in proving that the $T_I(F)$ have disjoint Fourier supports as $I$ varies over $\bigcup_{k=0}^{N-1} \I_k$. Hence $L^2$
orthogonality gives
\begin{equation}\label{eq:intro-cubic-tree}
 \norm F_6^6
 =\sum_{k=0}^{N-1}\sum_{I\in\I_k}\norm{T_I(F)}_2^2
  +\sum_{K\in\I_N}\norm{F_K}_6^6,
\end{equation}
from which we control the left hand side of \eqref{eq:main}. Note that 
\[
T_I(F) = \Big(\sum_{J \in C(I)} F_J\Big)^3 - \sum_{J \in C(I)} F_J^3.
\]
Denote by $C_3(I)$ the set of ordered tuples $(J_1,J_2,J_3) \in C(I)^3$ where $J_1, J_2, J_3$ are not all equal. 
As a result,
\[
T_I(F) = \sum_{(J_1,J_2,J_3) \in C_3(I)} F_{J_1} F_{J_2} F_{J_3}.
\]

\subsubsection{Induction on scales via normalized pieces}

To prove Theorem~\ref{thm:main}, after rescaling $F$, we may assume
\begin{equation}\label{eq:intro-normalization}
 \sum_{K\in\I_N}\norm{F_K}_\infty^2=1.
\end{equation}
In this normalization, it suffices to prove
\begin{equation}\label{eq:intro-normalized-goal}
 \norm F_6^6\lesssim_p m\norm F_2^2,
\end{equation}
where at most $2^m$ terminal pieces are nonzero ($m \geq 1$). To do so we induct on scale.
For every frequency interval $I$, set
\[
 A_I=\sum_{\substack{K\in\I_N\\K\subset I}}
       \norm{F_K}_\infty^2,
 \qquad E_I=\norm{F_I}_2^2.
\]
When $A_I>0$, we normalize $F_I$ by defining $f_I=A_I^{-1/2}F_I.$
Thus $f_I$ obeys the normalization \eqref{eq:intro-normalization}
on the frequency interval $I$. 
For 
$\mathbf J=(J_1,J_2,J_3)\in C_3(I)$, the broad set is defined using
the normalized functions $f_{J_1},f_{J_2},f_{J_3}$ by
\[
 \mathcal B_{\mathbf J}
 =\{x:\sum_{i=1}^3|f_{J_i}(x)|^6 \leq 6p^3|f_{J_1}(x)f_{J_2}(x)f_{J_3}(x)|^2\}.
\]
As will be shown in Proposition~\ref{prop:broad} below, the main estimate for one normalized triple is
\begin{equation}\label{eq:intro-broad-triple}
 \int_{\mathcal B_{\mathbf J}}
 |f_{J_1}f_{J_2}f_{J_3}|^2
 \lesssim_p m\sum_{i=1}^3\norm{f_{J_i}}_2^2.
\end{equation}
Our induction hypothesis bounds this integral on the complement of the broad set by the same bound. Consequently,
\begin{equation}\label{eq:intro-full-triple}
 \int_{\Qp^2}|f_{J_1}f_{J_2}f_{J_3}|^2
 \lesssim_p m\sum_{i=1}^3\norm{f_{J_i}}_2^2,
\end{equation}
which feeds into \eqref{eq:intro-cubic-tree} via
\begin{equation} \label{eq:TIF_intro_bdd}
\|T_I(F)\|_2^2 \lesssim_p \sum_{(J_1,J_2,J_3) \in C_3(I)} \int_{\Qp^2} |F_{J_1} F_{J_2} F_{J_3}|^2 = \sum_{(J_1,J_2,J_3) \in C_3(I)} A_{J_1} A_{J_2} A_{J_3} \int_{\Qp^2} |f_{J_1} f_{J_2} f_{J_3}|^2.
\end{equation}

\subsubsection{The telescoping estimate} %

In fact, combining \eqref{eq:TIF_intro_bdd} with  \eqref{eq:intro-full-triple} and using $\|f_J\|_2^2 = A_J^{-1} E_J$ gives the key bound
\begin{equation} \label{eq:intro-TI-bdd}
 \norm{T_I(F)}_2^2
 \lesssim_p m
 (A_I^2E_I-\sum_{J\in C(I)}A_J^2E_J)
\end{equation}
which one can substitute back into \eqref{eq:intro-cubic-tree}.
We sum \eqref{eq:intro-TI-bdd} over all non-terminal frequency caps $I$, and the right hand side telescopes:
\begin{equation*}%
 \sum_{k=0}^{N-1}\sum_{I\in\I_k} (A_I^2E_I-\sum_{J\in C(I)}A_J^2E_J)= \norm F_2^2-
 \sum_{K\in\I_N}\norm{F_K}_\infty^4
                         \norm{F_K}_2^2.
\end{equation*}
The terminal term in \eqref{eq:intro-cubic-tree} can be estimated via
$\norm{F_K}_6^6\leq\norm{F_K}_\infty^4\norm{F_K}_2^2.$
Hence $$\|F\|_6^6 \lesssim_p m (\|F\|_2^2 - \sum_{K \in \I_N} \|F_K\|_{\infty}^4 \|F_K\|_2^2 ) + \sum_{K \in \I_N} \|F_K\|_{\infty}^4 \|F_K\|_2^2 \lesssim_p m\|F\|_2^2,$$ establishing \eqref{eq:intro-normalized-goal} modulo the proof of \eqref{eq:intro-broad-triple}.
Observe that this telescoping mechanism allows us to sum over all scales $k$ without incurring any log loss.

\subsubsection{Amplitude decomposition}

To prove \eqref{eq:intro-broad-triple}, it is not too hard to see that the contribution from the region where
$|f_{J_1}f_{J_2}f_{J_3}|\lsm 1$ can be bounded directly. On the other hand, for dyadic
$\alpha\gtrsim 1$, set
\[
 U_\alpha=\{x\in\mathcal B_{\mathbf J}:
 \alpha^3<|f_{J_1}(x)f_{J_2}(x)f_{J_3}(x)|\leq(2\alpha)^3\}.
\]
Since $|f_{J_i}|\leq2^{m/2}$, only $O(m)$ of these sets are nonempty.
Thus \eqref{eq:intro-broad-triple} follows from the uniform estimate
\begin{equation}\label{eq:intro-level-set}
 \alpha^6|U_\alpha|\lesssim_p
 \sum_{i=1}^3\norm{f_{J_i}}_2^2.
\end{equation}
Summing this estimate over the $O(m)$ values of $\alpha$ for which $U_{\alpha}$ is nonempty is
the only source of the logarithmic loss in the proof of Theorem~\ref{thm:main}.

\subsubsection{A compensated square function and a new low lemma}

We now prove \eqref{eq:intro-level-set} using a modified version of the high-low method, following Guth, Maldague, and Wang \cite{GuthMaldagueWang2024}. Fix $\alpha$ and put $f=\sum_{J \in \{J_1,J_2,J_3\}} f_{J}$. Starting from
$f_{N+1}=f$, construct $f_N,\ldots,f_1$ from fine to coarse, using a
pruning height $L$ chosen so that $\alpha L\sim_p 1$. Once
$f_{k+1}$ has been constructed, decompose it into its frequency pieces
$f_{k+1,I}$, $I\in\I_k$, prune by removing wavepackets with modulus above $L$, and denote
the sum of the retained pieces by $f_k$. We write $f_{k,I}$ for the
retained part of $f_{k+1,I}$; thus $|f_{k,I}|\leq L$, while
$f_{k+1,I}-f_{k,I}$ is the discarded part. The usual square function
at scale $k$ would be
\[
 \sum_{I\in\I_k}|f_{k+1,I}|^2.
\]
We instead use
\begin{equation} \label{eq:intro-sq-def}
 g_k:=\sum_{I\in\I_k}|f_{k+1,I}|^2
 +\sum_{j=k+1}^N\sum_{I\in\I_j}
   |f_{j+1,I}-f_{j,I}|^2.
\end{equation}
Keeping track of the discarded pieces in $g_k$ leads to a particularly clean bound for the difference between the original function and the pruned pieces:
\begin{equation} \label{eq:intro-prune-error}
|f_J - f_{k+1,J}| \leq \frac{1}{L} g_{k+1}, \quad J \in \I_{k+1}
\end{equation}
(as opposed to $\sum_{j \geq k+1} g_j$ which we could only bound by trivially summing over $j$ and losing a log). Since the correction terms are all low-frequency, this leads to a new low lemma (Lemma~\ref{lem:low}) that says
\begin{equation*} %
 P_{k+1}g_k=g_{k+1},
\end{equation*}
where $P_{k+1}$ is the Fourier projection onto the ball $B_{p^{-(k + 1)}}$ of radius $p^{-(k+1)}$ centered at the origin.
Note we have an equality (as opposed to an inequality). Thus the high-frequency part is exactly the difference of two
successive compensated square functions:
\begin{equation*}%
 h_k=(1-P_{k+1})g_k
     =g_k-g_{k+1}.
\end{equation*}
The function $h_k$ is commonly denoted by $g_{k,\mathrm{high}}$ in the
high-low literature. In particular, sums of the high parts telescope:
\begin{equation}\label{eq:intro-high-telescope}
 g_k=g_N+\sum_{j=k}^{N-1}h_j.
\end{equation}

\subsubsection{A new high lemma exploiting orthogonality between all scales}

Motivated by \eqref{eq:intro-high-telescope}, consider the sum of all the high parts
\[
 H:=\sum_{k=1}^{N-1}h_k.
\]
Our new high lemma (Lemma~\ref{lem:high}) reads
\begin{equation} \label{eq:intro-Hnorm}
\|H\|_2^2 = \sum_{k=1}^{N-1}\norm{h_k}_2^2
 \lesssim_p L^2\sum_{i=1}^3\norm{f_{J_i}}_2^2.
\end{equation}
The key point is that the $h_k$ are orthogonal across scales, so \eqref{eq:intro-Hnorm} incurs no loss in the number of scales.

\subsubsection{A new high-low dichotomy}
We now run the high-low machinery to prove
\eqref{eq:intro-level-set}. In previous generations of the high-low argument, one typically determined whether a point $x$ belonged to a high set $\Omega_k$ by comparing $g_k(x)$ to its high part $h_k(x)$: a point belonged to the $k$-th high set if $g_k(x) \leq N |h_k(x)|$. The log factor $N \sim_p \log(p^N)$ was there to ensure a pointwise estimate 
\begin{equation*} %
g_{k+j}(x) \leq (1 + O(1/N)) g_{k+j+1}(x) \quad \text{on $ \Q_p^2 \setminus \Omega_{k+j}$, \quad for $j = 1, \dots, N-k-1$},
\end{equation*}
which could be composed together since $(1+O(1/N))^{N-k} \lesssim 1$ uniformly in $N$.
The new insight here is to allow for a multiscale pointwise estimate on the high set, which avoids the aforementioned composition. In particular, one can allow 
\begin{equation} \label{eq:intro-new-high-def1}
g_k(x) \lesssim |\sum_{j=k}^{N-1} h_j(x) |
\end{equation}
for $x \in \Om_k$ where one permits some low frequencies of $g_k$ to remain present on the right hand side. This is sufficient to close the argument without unnecessary logs thanks to the global high lemma \eqref{eq:intro-Hnorm}. 

Because of our new low lemma, the right hand side of \eqref{eq:intro-new-high-def1} is the modulus of $g_k-g_N$ (see  \eqref{eq:intro-high-telescope}). This motivates one to define our new high sets $\Omega_k$ for $1 \leq k < N$ by
\begin{equation} \label{eq:intro-new-high-def2}
 \Omega_k=\{g_k>2g_N\}\cap
 \bigcap_{j=k+1}^{N-1}\{g_j\leq2g_N\}.
\end{equation}
This implies $g_k - g_N > g_k/2$ on $\Omega_k$, which shows \eqref{eq:intro-new-high-def1} with implicit constant 2.
The key is to bound $\sum_{k=1}^{N-1} \alpha^6 |U_{\alpha} \cap \Omega_k|$.
The definition of $U_{\alpha}$ and the pointwise pruning estimate \eqref{eq:intro-prune-error} give
\[
 \alpha^6\lesssim_p\alpha^2
 |f_{k+1,J_1}f_{k+1,J_2}|^2
\]
on $U_\alpha\cap\Omega_k$.
Local bilinear restriction then yields
\begin{equation} \label{eq:intro-bilinear}
 \alpha^6|U_\alpha\cap\Omega_k|
 \lesssim_p\alpha^2\int_{\Omega_k}g_k^2.
\end{equation}
Moreover, on $\Omega_k$, \eqref{eq:intro-new-high-def1} implies
\begin{equation}\label{eq:intro-high-domination}
 g_k\leq 2|P_{k}H|,
\end{equation}
where $P_{k}H$ is the low-frequency part of $H$.

The locally constant property shows that $g_k, \ldots, g_N$ are constant on every square of side length $p^k$
and hence $\Om_k$ is the disjoint union of such squares.
Next, note that $(P_{k}H)(x) = (\wc{1}_{B_{p^{-k}}} \ast H)(x) = p^{-2k}(1_{B_{p^k}} \ast H)(x) = \frac{1}{|Q_x|}\int_{Q_x}H(y)\, dy$
where $Q_x$ is the square of side length $p^k$ centered at $x$. For each $x$, either $Q_x$ is completely contained
in $\Omega_k$ or entirely disjoint from it. Thus
\begin{align}\label{proj}
P_{k}(1_{\Om_k}H)(x) = \frac{1}{|Q_x|}\int_{Q_x}1_{\Om_k}(y)H(y)\, dy = \frac{1}{|Q_x|}\int_{Q_x}1_{\Om_k}(x)H(y)\, dy = 1_{\Om_k}P_{k}H(x).
\end{align}
Combining this with \eqref{eq:intro-high-domination} gives
\begin{equation} \label{eq:intro-sumgk}
 \sum_{k=1}^{N-1}\int_{\Omega_k}g_k^2 \leq \sum_{k=1}^{N-1} \int_{\Omega_k} 4 |H|^2 
 \leq4\norm H_2^2.
\end{equation}
Note that the sum over $k$ does not incur any extra loss here, because we exploited the disjointness of $\Omega_k$. Consequently, using \eqref{eq:intro-bilinear}, \eqref{eq:intro-sumgk} and \eqref{eq:intro-Hnorm},
\begin{equation*} %
 \sum_{k=1}^{N-1}\alpha^6|U_\alpha\cap\Omega_k|
 \lesssim_p\alpha^2L^2
 \sum_{i=1}^3\norm{f_{J_i}}_2^2.
\end{equation*}
This is $\lesssim_p \sum_{i=1}^3 \|f_{J_i}\|_2^2$ since
$\alpha L\sim_p1$. Together with the simpler estimate on the low set, this establishes \eqref{eq:intro-level-set}.

\subsection{The rational circle} \label{sect:circle}

Here we explain in more detail how we handle the rational circle. 

\begin{lemma}\label{lem:aniso}
Let $p$ be a prime with $p\equiv3\pmod4$. Then for all $x,y\in\Qp$,
\[
  |x^2+y^2|_p=\max(|x|_p,|y|_p)^2 .
\]
\end{lemma}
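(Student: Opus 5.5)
The plan is to reduce to the case where one of $x,y$ is a unit, and then use that $-1$ is not a square modulo $p$.

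The ultrametric inequality already gives $|x^2+y^2|_p\le\max(|x|_p^2,|y|_p^2)$, so only the reverse inequality needs proof. If $x=y=0$ there is nothing to prove. Otherwise, by symmetry assume $|x|_p\ge|y|_p$, so $x\neq 0$.

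Divide by $x^2$ and set $u=y/x$, which lies in $\cO$. The claim then becomes
\[
|1+u^2|_p=1 \quad \text{for all } u\in\cO.
\]
Here $1+u^2\in\cO$, so $|1+u^2|_p\le 1$. Equality fails only if $1+u^2\equiv 0\pmod p$. That would mean $\bar u^2=-1$ in $\mathbb F_p$, i.e.\ $-1$ is a quadratic residue mod $p$.

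This is impossible when $p\equiv 3\pmod 4$. By Euler's criterion, $(-1)^{(p-1)/2}=-1$. Equivalently, $\mathbb F_p^\times$ is cyclic of order $p-1$, which is not divisible by $4$, so it has no element of order $4$. Hence $|1+u^2|_p=1$, and multiplying back by $|x|_p^2$ gives $|x^2+y^2|_p=|x|_p^2=\max(|x|_p,|y|_p)^2$.

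No step is a real obstacle. The only point needing care is the normalization to the unit case, which requires choosing the variable of larger absolute value so that the quotient $u$ lies in $\cO$.
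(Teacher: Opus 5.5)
Your proposal is correct and follows essentially the same route as the paper: divide by the variable of larger absolute value so the quotient lies in $\cO$, then use Euler's criterion to show $-1$ is not a square in $\mathbb F_p$, which forces $|1+u^2|_p=1$. Your alternative justification via the cyclic group $\mathbb F_p^\times$ having no element of order $4$ is also valid.
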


\begin{proof}
Both sides vanish when $x=y=0$, and both are symmetric in $x$ and $y$,
so we may assume $y\neq0$ and $|x|_p\leq|y|_p$. Put $t:=x/y$, so
$t\in\cO$ and $x^2+y^2=y^2 (t^2+1).$
Since $p\equiv3\pmod4$, Euler's criterion gives
$(-1)^{(p-1)/2}\equiv -1\pmod{p}$, so $-1$ is not a square in $\mathbb F_p$. Hence
the reduction $\bar t^{\,2}+1$ is a nonzero element of $\mathbb F_p$,
which says exactly that $|t^2+1|_p=1$. Therefore
\[
  |x^2+y^2|_p=|y|_p^2=\max(|x|_p,|y|_p)^2 . \qedhere
\]
\end{proof}

\begin{lemma}%
Let $p$ be a prime with $p\equiv3\pmod4$ and let $(x,y)\in\Qp^2$
satisfy $x^2+y^2=1$. Then $x,y\in\cO$. In particular every rational
point of the unit circle lies in $\cO^2$.
\end{lemma}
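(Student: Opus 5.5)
The plan is to deduce this directly from Lemma~\ref{lem:aniso}. Suppose $(x,y)\in\Qp^2$ satisfies $x^2+y^2=1$. Applying Lemma~\ref{lem:aniso} to the pair $(x,y)$ gives
\[
 1=|1|_p=|x^2+y^2|_p=\max(|x|_p,|y|_p)^2 .
\]
Hence $\max(|x|_p,|y|_p)=1$. In particular $|x|_p\leq 1$ and $|y|_p\leq 1$, which is exactly the statement that $x,y\in\cO$.

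For the second assertion, embed $\Q\hookrightarrow\Qp$ in the usual way. A rational point $(x,y)\in\Q^2$ with $x^2+y^2=1$ is then a point of $\Qp^2$ satisfying the same equation. By the first part its coordinates lie in $\cO$. Equivalently, the denominators of $x$ and $y$ (in lowest terms) are prime to $p$.

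There is no real obstacle here. The only substantive input is the anisotropy of the quadratic form $x^2+y^2$ over $\Qp$ when $p\equiv 3\pmod 4$, and that is already established in Lemma~\ref{lem:aniso}. The one point to keep straight is that the lemma forbids cancellation in $x^2+y^2$. Without it, $x$ and $y$ could both have large absolute value while $x^2+y^2$ stays a unit, and that cannot happen here.
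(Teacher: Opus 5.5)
Your proposal is correct and matches the paper's proof: applying Lemma~\ref{lem:aniso} gives $\max(|x|_p,|y|_p)^2=|x^2+y^2|_p=|1|_p=1$, so both coordinates lie in $\cO$. Your embedding $\Q\hookrightarrow\Qp$ for the rational-point claim is the step the paper leaves implicit.
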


\begin{proof}
By Lemma~\ref{lem:aniso},
$\max(|x|_p,|y|_p)^2=|x^2+y^2|_p=|1|_p=1,$
so $\max(|x|_p,|y|_p)=1$ and both coordinates lie in $\cO$. 
\end{proof}

Fix now a prime $p$ with both $p \equiv 2 \pmod 3$ and $p \equiv 3 \pmod 4$ ($p = 11$ will do). Let $\mathcal Z$ be the set of solutions 
$\{(a,b) \in \mathbb{F}_p^2 \colon a^2 + b^2 = 1\}.$
Then the rational circle is contained in a finite union
$\bigcup_{(a,b) \in \mathcal Z} \mathcal{C}_{a,b}$
where each 
\[
\mathcal{C}_{a,b} := \{(x,y) \in \Z_p^2 \colon x^2 + y^2 = 1, \quad x \equiv a \pmod{p}, \quad y \equiv b \pmod{p}\}
\]
is an affine image of a normalized parabolic arc: one obtains $\mathcal{C}_{0,1}$ from $\mathcal{C}_{a,b}$ by a rotation $(x,y) \mapsto (B x - A y, A x + B y)$ where $A, B \in \Z_p$ are liftings of $a$ and $b$ so that $A^2 + B^2 = 1$ in $\Z_p$, and $a \equiv A \pmod p$, $b \equiv B \pmod p$ (the existence of $A$ and $B$ is guaranteed by Hensel's lemma since $p$ is odd). Furthermore, $\mathcal{C}_{0,1}$ is an affine image of the ellipse parametrized by $\{(t,\phi(t)) \colon t \in \cO\}$ where $\phi(t)$ was given by \eqref{eq:circlephi}. If $S$ is any finite subset of the rational circle, we partition
\[
S = \bigcup_{(a,b) \in \mathcal Z} S_{a,b}, \quad S_{a,b}:= S \cap \mathcal{C}_{a,b}
\]
and bound the exponential sum moment
\[
\int_{\T^2} | \sum_{\xi \in S} a_{\xi} e((D\xi) \cdot (x,y)) |^6 dx dy \lesssim_p \sum_{(a,b) \in \mathcal Z} \int_{\T^2} | \sum_{\xi \in S_{a,b}} a_{\xi} e((D\xi) \cdot (x,y)) |^6 dx dy.
\]
Since each $\mathcal{C}_{a,b}$ is an affine image of a normalized parabolic arc, Theorem~\ref{thm:main} applies. We apply the reduction given immediately after Theorem~\ref{thm:main}, and obtain Theorem~\ref{thm:weighted-curves} for the circle as well.

\subsection{Use of AI tools}
This project began as a series of notes 
that the authors wrote when the first author visited the
second author at the Australian National University
in July 2023.
At that time, we were trying to work out the number of log powers in subsequent versions of the high-low method
\cite{GuthMaldague2022, Johnsrude2025} that came
after \cite{GuthMaldagueWang2024}. Despite many optimizations along these lines, our best result was only $K(M) \lsm (\log M)^{10/6}$ which we did not publish.

On July 21, 2026, the first
author decided to upload these notes to GPT-5.6 Sol
and asked whether the number of log powers could be reduced.
GPT-5.6 Sol gave the first key observation of the orthogonal expansion of $F^3$ (Proposition \ref{orthogonalexpansion}) and was able to show
that $K(M) \lsm (\log M)$.
Asking whether this bound could be improved
gave $K(M) \lsm (\log M)^{1/2}$ which resulted from creating the absolute stopping sets \eqref{eq:intro-new-high-def2} (as opposed to the constraint that $g_k \lsa g_{k}^h$ found in older works) along with a prototype of what became the new High Lemma, Lemma \ref{lem:high}.
Further prompting gave no further improvements.
On September 6, 2026, with the release of GPT-6 Astra, the first author again asked whether an improvement could be found, but this yielded no results. 
Asking if a particular dyadic pigeonholing could be avoided suddenly yielded the new compensated square functions \eqref{eq:intro-sq-def} and a more efficient pruning approximation \eqref{eq:intro-prune-error} which gave the sharp upper bound of $K(M) \lsm (\log M)^{1/6}$.
Over the next few weeks, the authors digested and simplified the AI-generated manuscript, yielding the argument we provide below.
GPT-6 Astra was also used to create the \texttt{tikz} code for Figure \ref{tiffig} and proofread the final manuscript.

\subsection{Comparison with \cite{DengFanGuoGuoLuo2026}}
On September 16, 2026, while preparing this manuscript, the authors were made aware of \cite{DengFanGuoGuoLuo2026} which also independently proved the sharp discrete restriction estimate for the parabola using essentially the same methods. We briefly comment on some similarities and differences between our paper and theirs.

Equations (38) and (39) of \cite{DengFanGuoGuoLuo2026} correspond to 
the definitions of $f_{k, I}$ and $e_{k, I}$ in \eqref{fkidef} and \eqref{ekidef}, respectively. The definition of $G_{k}(z)$ on page 11 of \cite{DengFanGuoGuoLuo2026} is exactly the compensated square function in \eqref{corrsquare}. The biggest difference
between the argument here and the one in \cite{DengFanGuoGuoLuo2026} is that we view $\nms{F}_{L^{6}}^{6} = \nms{F^3}_{L^{2}}^{2}$ while
they use that it is equal to $\nms{|F|^2}_{L^{3}}^{3}$. The former allows us to make
use of a clever $L^2$ orthogonal expansion of $F^3$ (Proposition \ref{orthogonalexpansion}) while they need: 
\begin{enumerate}[(a)]
\item a (weak) $L^3$ ``Littlewood-Paley" type theorem to separate the contributions to $|F|^2$ from different frequency scales (see \cite[Lemma 3.1]{DengFanGuoGuoLuo2026});
\item an additional interpolation, between an $L^2$ orthogonal estimate for the contributions to $|F|^2$ from the same frequency scale and a trivial $L^{\infty}$ estimate, to obtain control on some $\ell^{3/2}$ sums of (weak) $L^3$ spatial norms of these different contributions (see \cite[Lemma 3.4]{DengFanGuoGuoLuo2026}).
\end{enumerate}
The arguments in our paper are slightly simpler in that regard, but it is not immediately clear how one could use our method to obtain a weak $L^6$ discrete Strichartz estimate as in \cite{DengFanGuoGuoLuo2026}. Our arguments are also presented in the language of the high-low method in decoupling over $\Q_{p}$ rather than exponential sums, and clarify the role of $(\sum_n |a_n|^2)^3$ in \cite{DengFanGuoGuoLuo2026}: one could take two copies of $\sum_n |a_n|^2$ as $\sum_K \|F_K\|_{\infty}^2$, and one copy as a normalized version of $\sum_K \|F_K\|_2^2$.

\subsection{Acknowledgments}
Li is partially supported by DMS-2453448. Yung is partially supported by a
Discovery Project DP250103744 from the Australian Research Council. We also acknowledge kind support from the American Institute of Mathematics through the Fourier restriction research community.

\section{Geometry of normalized parabolic arcs}

\begin{lemma}[Taylor expansion]\label{lem:taylor}
Let $\phi$ be as in Definition~\ref{def:good-arc}. For $a\in\cO$ set
\[
  b_n(a):=\sum_{m\geq n}\binom{m}{n}c_m\,a^{m-n}
\]
where $c_0, c_1$ are understood to be 0.
Then $b_n(a)$ converges in $\cO$, $b_1(a) = \phi'(a)$, $b_2(a)\equiv\kappa \pmod p$, $b_n(a)\in p\cO$ for all $n\geq3$, and $$\phi(a+h)=\phi(a) + \sum_{n = 1}^{\infty} b_n(a)h^n$$ for all
$h\in\cO$.
\end{lemma}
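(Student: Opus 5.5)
The plan is to expand $\phi(a+h)=\sum_m c_m(a+h)^m$ by the binomial theorem and rearrange. This rearrangement is justified by the ultrametric structure of $\cO$.

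First, convergence of $b_n(a)$. Since $a\in\cO$ and $\binom{m}{n}\in\Z$, each term satisfies $|\binom{m}{n}c_m a^{m-n}|_p\le |c_m|_p\to0$. In $\Qp$ a series converges as soon as its terms tend to $0$, so $b_n(a)$ converges. The terms lie in the closed set $\cO$, so the limit does as well. The formula for $b_1(a)$ is exactly the termwise derivative $\sum_m m c_m a^{m-1}$. Termwise differentiation of a power series with $|c_m|_p\to0$ on $\cO$ is legitimate, and one can in fact read off $b_1=\phi'(a)$ from the expansion proved below, since the remainder $\sum_{n\ge2}b_n h^n$ is $O(|h|_p^2)$.

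Second, the congruences. For $b_2(a)=c_2+\sum_{m\ge3}\binom m2 c_m a^{m-2}$, every term with $m\ge3$ has $|c_m|_p<1$, which means $c_m\in p\cO$. Hence the tail lies in $p\cO$, because $p\cO$ is closed and an ideal. This gives $b_2\equiv c_2=\kappa\pmod p$. For $n\ge3$, every $m\ge n\ge3$ has $c_m\in p\cO$, so $b_n(a)\in p\cO$.

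Third, the expansion itself, which is the main point. We have $\phi(a+h)=\sum_{m\ge2}c_m\sum_{n=0}^m\binom mn a^{m-n}h^n$. The double family $\{c_m\binom mn a^{m-n}h^n\}_{0\le n\le m}$ has terms bounded by $|c_m|_p$. Because $|c_m|_p\to0$ and there are only finitely many pairs for each $m$, the terms tend to $0$ along any enumeration of the family. In a non-archimedean complete field such a family is unconditionally summable, so we may sum over $n$ first. This yields $\sum_{n\ge0}b_n(a)h^n$, with $b_0(a)=\phi(a)$; note that $b_0$ is $\phi(a)$ because $c_0=c_1=0$. The only step requiring care is this interchange of summation, which is where I expect the main obstacle. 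I would handle it by citing, or briefly proving, the standard fact: if $|x_{m,n}|_p\to0$ as $(m,n)\to\infty$ in the cofinite sense, then the iterated sums in either order agree. The proof compares partial sums over finite sets containing all terms of size $\ge\varepsilon$ and uses the ultrametric inequality.
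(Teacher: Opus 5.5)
Your proposal is correct and follows the same route as the paper: you bound each term $\binom{m}{n}c_m a^{m-n}h^n$ by $|c_m|_p$ to get convergence, use $c_m\in p\cO$ for $m\geq 3$ for the congruences, identify $b_1$ as the termwise derivative, and expand $\phi(a+h)$ binomially and rearrange. Your appeal to unconditional summability of a family whose terms tend to $0$ in a complete non-archimedean field makes explicit what the paper compresses into ``rearranging the series''; note that the rearrangement you need makes $n$ the outer index, i.e.\ it sums over $m$ first rather than ``over $n$ first'' as you wrote, but this is only a wording slip.
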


\begin{proof}
Let $a, h \in \cO$. Since
\[
\Big| \binom{m}{n} c_m a^{m-n} h^n \Big|_p \leq |c_m|_p \to 0 \quad \text{as $m \to \infty$},
\]
we see that $|b_n(a)|_p \leq \sup_{m \geq n} |c_m|_p \leq 1$, so $b_n(a) \in \cO$ for all $n$, and $|b_n(a)|_p \to 0$ as $n \to \infty$. From $|c_m|_p < 1$ for all $m \geq 3$, one also obtains $b_n(a) \in p \cO$ for all $n \geq 3$, and $b_2(a) \equiv c_2 = \kappa \pmod p$ as desired. It is also easy to see $b_1(a) = \sum_{m \geq 1} m c_m a^{m-1} = \phi'(a)$. Finally 
we expand
\[
\phi(a+h) = \sum_{m = 2}^{\infty} c_m (a+h)^m = \sum_{m = 2}^{\infty} \sum_{n=0}^m \binom{m}{n} c_m a^{m-n} h^n.
\]
Rearranging the series and noting $b_0(a) = \phi(a)$ gives the desired expansion for $\phi(a+h)$. 
\end{proof}

\begin{remark}
 In proving the above lemma it may be tempting to just write 
\[
\phi(a+h) = \sum_{n=0}^{\infty} \frac{\phi^{(n)}(a)}{n!} h^n
\]
but then it is not as easy to justify that $\phi^{(n)}(a)/n! \in \cO$ since even though $\phi^{(n)}(a) \in \cO$ for all $n$, in general $1/n!$ is not in $\cO$.
\end{remark}

For $k \geq 0$ and $I \in \I_k$, let 
\begin{align*}%
\Xi_I := \{(\xi,\eta) \in \cO^2 \colon \xi \in I, |\eta - \phi(\xi)|_p \leq p^{-\min\{2k,2N\}}\},
\end{align*}
so that the Fourier support of $F_I$ is contained in $\Xi_I$.
Then Lemma~\ref{lem:taylor} shows that 
\[
\Xi_I = \{(\xi,\eta) \colon |\xi-a|_p \leq p^{-k}, |\eta - \phi(a) - \phi'(a) (\xi-a)|_p \leq p^{-\min\{2k,2N\}}\}
\]
whenever $a$ is a point in $I$, so $\Xi_I$ is a parallelepiped, and $|F_I|$ is constant on translates of the dual parallelepiped
\begin{align*}%
T_{I}^{0} := \{(x,y) \in \Q_p^2 \colon |x + \phi'(a) y |_p \leq p^k, |y|_p \leq p^{\min\{2k,2N\}} \}.
\end{align*}
We will denote the set of all such translates by $\T_I$.

Proposition \ref{prop:rescalinggeom} shows that the class of normalized parabolic arcs is invariant under parabolic scaling.

\begin{proposition}\label{prop:rescalinggeom}
Let $\Gamma = \{(t,\phi(t)) \colon t \in \cO\}$ be a normalized parabolic arc, and  $F$ be Fourier supported in a $p^{-2N}$ neighborhood of $\Gamma$. 
\begin{enumerate}[(a)]
\item If $k \geq 0$ and $I \in \I_k$, let $\Psi_I$ be the invertible affine map given by
\[
\Psi_I(\xi,\eta) := (a,\phi(a)) + L_I(\xi,\eta), \quad L_I(\xi,\eta):= (p^k \xi, \phi'(a) p^k \xi + p^{2k} \eta),
\]
where $a$ is any point in $I$.
Then $\Gamma_I := \Psi_I^{-1} (\Gamma|_I)$ is also a normalized parabolic arc, which can be written as $\{(t,\phi_I(t)) \colon t \in \cO\}$ where
\[
\phi_I(t) := p^{-2k} ( \phi(a+p^k t) - \phi(a) - \phi'(a) p^k t ).
\] 
\item If $0 \leq k \leq N$ and $I \in \I_k$, then
\begin{align*}
    \widetilde{F_I}(x,y) := F_I(L_I^{-t}(x,y)) \chi(-(a,\phi(a)) \cdot L_I^{-t}(x,y)) 
\end{align*}
is Fourier supported in a $p^{2(k-N)}$ neighborhood of $\Gamma_I$.
\end{enumerate}
\end{proposition}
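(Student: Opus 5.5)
For part (a), I will compute $\Psi_I^{-1}(\Gamma|_I)$ directly and then read off the power series coefficients of $\phi_I$ from Lemma~\ref{lem:taylor}. A point $(\xi,\phi(\xi))$ with $\xi\in I$ can be written as $\xi=a+p^k t$ with $t\in\cO$, since $I$ is the ball of radius $p^{-k}$ about $a$. Then $\Psi_I^{-1}(\xi,\phi(\xi))=(t,\eta)$, where $p^{2k}\eta=\phi(a+p^kt)-\phi(a)-\phi'(a)p^kt$. This gives exactly the stated formula for $\phi_I$, and as $t$ ranges over $\cO$ we recover all of $\Gamma|_I$.

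Next I apply Lemma~\ref{lem:taylor} with $h=p^kt$, using $b_1(a)=\phi'(a)$:
\[
\phi_I(t)=\sum_{n\ge2} b_n(a)p^{(n-2)k}t^n .
\]
Set $c_n^I:=b_n(a)p^{(n-2)k}$. Each $c_n^I$ lies in $\cO$, and $c_2^I=b_2(a)\equiv\kappa\pmod p$, so $|c_2^I|_p=1$. For $n\ge3$, $b_n(a)\in p\cO$ gives $|c_n^I|_p<1$. Finally $|c_n^I|_p\le|b_n(a)|_p\to0$, which gives convergence on $\cO$. So $\Gamma_I$ is a normalized parabolic arc. The only care needed is that the choice of $a\in I$ is arbitrary, and the argument holds for each choice.

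For part (b), I will compute the Fourier transform of $\widetilde{F_I}$ by a change of variables. Multiplying by the character $\chi(-(a,\phi(a))\cdot u)$ with $u=L_I^{-t}(x,y)$ translates the frequency support by $-(a,\phi(a))$. Composing with $L_I^{-t}$ transforms frequencies by $L_I^{-1}$, up to a Jacobian constant, because $(L_I^{-t}x)\cdot\zeta=x\cdot L_I^{-1}\zeta$. Hence
\[
\supp\widehat{\widetilde{F_I}}\subset \Psi_I^{-1}(\supp \widehat{F_I})\subset\Psi_I^{-1}(\Xi_I\cap\text{($p^{-2N}$-nbhd of }\Gamma|_I)).
\]
Now take $(\xi,\eta)$ with $\xi\in I$ and $|\eta-\phi(\xi)|_p\le p^{-2N}$, and let $(t,\eta')=\Psi_I^{-1}(\xi,\eta)$. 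Then $p^{2k}(\eta'-\phi_I(t))=\eta-\phi(\xi)$, so $|\eta'-\phi_I(t)|_p=p^{2k}|\eta-\phi(\xi)|_p\le p^{2k-2N}$. This uses $k\le N$ only so that the neighborhood is meaningful, i.e.\ of radius at most $1$.

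The main obstacle is bookkeeping of signs and conventions in part (b), specifically confirming that the transpose-inverse on the physical side corresponds to $L_I^{-1}$ on the frequency side under the paper's normalization $\widehat f(\xi)=\int f(x)\overline{\chi(x\cdot\xi)}\,dx$. I would verify this first with the substitution $u=L_I^{-t}x$, $dx=|\det L_I|_p\,du$. Everything else is routine.
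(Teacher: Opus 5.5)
Your proposal is correct and follows the paper's proof essentially step for step. In (a) you expand $\phi_I(t)=\sum_{n\ge 2} b_n(a)p^{k(n-2)}t^n$ via Lemma~\ref{lem:taylor} and check the unit, $p\cO$, and decay conditions on the coefficients. In (b) you use the change of variables $\widehat{\widetilde{F_I}}(\zeta)=|\det L_I|_p\,\widehat{F_I}(\Psi_I(\zeta))$ together with the identity $p^{2k}(\eta'-\phi_I(t))=\eta-\phi(\xi)$, exactly as the paper does.
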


 \begin{proof}
For $(a)$, by Lemma \ref{lem:taylor}, we have
$\phi_{I}(t) = \sum_{n \geq 2}b_{n}(a)p^{k(n - 2)}t^n$. Since the graph of $\phi$ is a normalized parabolic
arc, $b_{2}(a) \equiv \kappa\pmod{p}$
and hence $b_{2}(a)$ (viewed as the coefficient of $t^2$ in $\phi_{I}(t)$) is a unit in $\cO$.
On the other hand, for $n \geq 3$, $|p^{k(n - 2)}b_{n}(a)|_{p} \leq |b_{n}(a)|_{p} \leq \sup_{m \geq n}|c_{m}|_{p} \rightarrow 0$ as $n \rightarrow \infty$. Therefore $\phi_{I}$
satisfies all the conditions in Definition \ref{def:good-arc}. Writing $I = a + p^k\cO$
for some $a \in I$ gives $\Psi_{I}(t, \phi_{I}(t)) =(a + p^{k}t, \phi(a + p^{k}t))$ and hence
the graph of $\phi_{I}$ over $\cO$
is exactly $\Psi_{I}^{-1}(\Gamma|_{I})$.

For $(b)$, we compute 
\begin{align*}
\wh{\wt{F_{I}}}(\xi, \eta) = |\det L_{I}|_{p}\wh{F_I}((a, \phi(a)) + L_{I}(\xi, \eta)) = p^{-3k}\wh{F_{I}}(a + p^{k}\xi, \phi(a) + \phi'(a)p^{k}\xi + p^{2k}\eta).
\end{align*}
By where the Fourier support of $F_{I}$
is defined, we have $a + p^{k}\xi \in I$ and 
$|\phi(a) + \phi'(a)p^{k}\xi + p^{2k}\eta - \phi(a + p^{k}\xi)|_{p} \leq p^{-2N}$. The first condition implies $\xi \in \cO$.
Meanwhile, by the definition of $\phi_{I}$,
the second condition implies $|p^{2k}(\eta - \phi_{I}(\xi))|_{p} \leq p^{-2N}$ and hence $|\eta - \phi_{I}(\xi)|_{p} \leq p^{2(k - N)}$.
 \end{proof}

\begin{lemma}\label{lem:transversality}
Let $p$ be an odd prime and let $\phi$ be as in
Definition~\ref{def:good-arc}. If $\xi_1,\xi_2,\xi_3,\xi_4\in\cO$ satisfy
\[
\xi_1 + \xi_2 = \xi_3 + \xi_4,
\]
then
\[
  |\phi(\xi_1)+\phi(\xi_2)-\phi(\xi_3)-\phi(\xi_4)|_p
  =|\xi_3-\xi_1|_p\,|\xi_4-\xi_1|_p .
\]
\end{lemma}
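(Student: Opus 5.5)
The plan is to Taylor expand around the single base point $a:=\xi_1$ using Lemma~\ref{lem:taylor}, and then show that the quadratic term strictly dominates all higher-order terms in the ultrametric sense.

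First I set $h_i:=\xi_i-a$ for $i=1,\dots,4$. Then $h_1=0$, all $h_i\in\cO$, and the linear relation $\xi_1+\xi_2=\xi_3+\xi_4$ becomes $h_2=h_3+h_4$. By Lemma~\ref{lem:taylor},
\[
\phi(\xi_1)+\phi(\xi_2)-\phi(\xi_3)-\phi(\xi_4)=\sum_{n\geq1} b_n(a)\bigl((h_3+h_4)^n-h_3^n-h_4^n\bigr).
\]
The $\phi(a)$ terms cancel because there are two plus signs and two minus signs, and the $h_1$ term contributes nothing. All the series involved converge absolutely in the ultrametric sense, since $|b_n(a)|_p\to0$, so combining them termwise is justified.

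Next I examine the sum term by term. The $n=1$ term vanishes identically. The $n=2$ term equals $2b_2(a)h_3h_4$. Since $p$ is odd and $b_2(a)\equiv\kappa\pmod p$ with $|\kappa|_p=1$, we have $|2b_2(a)|_p=1$, so this term has absolute value exactly $|h_3|_p|h_4|_p$. For $n\geq3$, the polynomial $(h_3+h_4)^n-h_3^n-h_4^n$ equals $h_3h_4\,Q_n(h_3,h_4)$, where $Q_n$ has integer coefficients (namely binomial coefficients). Hence $|Q_n(h_3,h_4)|_p\leq1$ for $h_3,h_4\in\cO$. Combining this with $b_n(a)\in p\cO$ from Lemma~\ref{lem:taylor}, each term with $n\geq3$ has absolute value at most $p^{-1}|h_3h_4|_p$. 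By the ultrametric inequality, the whole tail $\sum_{n\geq3}$ is then also bounded by $p^{-1}|h_3h_4|_p$.

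Finally, if $h_3h_4=0$, every term vanishes and both sides of the claimed identity are $0$. Otherwise, the strict ultrametric inequality $|x+y|_p=|x|_p$ whenever $|y|_p<|x|_p$ gives that the total has absolute value exactly $|h_3|_p|h_4|_p=|\xi_3-\xi_1|_p|\xi_4-\xi_1|_p$.

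There is no serious obstacle in this argument. The only points requiring care are justifying the rearrangement of the convergent series, and the observation that the integer-coefficient factorization through $h_3h_4$ costs nothing $p$-adically. The oddness of $p$ is used exactly once, to ensure that $2$ is a unit.
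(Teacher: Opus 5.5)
Your proposal is correct and follows essentially the same route as the paper. The paper also expands at $\xi_1$ via Lemma~\ref{lem:taylor} with increments $\xi_3-\xi_1$, $\xi_4-\xi_1$ and their sum. It then factors $(k+h)^n-k^n-h^n=kh\,\gamma_n$ with $\gamma_n\in\cO$, and concludes because $2b_2(\xi_1)$ is a unit (using that $p$ is odd) while the tail lies in $p\cO$.
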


\begin{proof}
Write $h := \xi_3 - \xi_1 = \xi_2 - \xi_4$ and $k:=\xi_4-\xi_1$. Then
\[
\xi_3 = \xi_1 + h, \quad \xi_4 = \xi_1+k, \quad \xi_2 = \xi_1 + h + k,   
\]
so
\[
  \phi(\xi_1)+\phi(\xi_2)-\phi(\xi_3)-\phi(\xi_4)=[\phi(\xi_1+h+k)-\phi(\xi_1)]
   -[\phi(\xi_1+h)-\phi(\xi_1)]
   -[\phi(\xi_1+k)-\phi(\xi_1)].
\]
Applying Lemma~\ref{lem:taylor} at $a=\xi_1$ to each of the three
brackets, with increments $k+h$, $h$ and $k$, respectively, and
subtracting the resulting convergent series termwise, we obtain
\[
  \phi(\xi_1)+\phi(\xi_2)-\phi(\xi_3)-\phi(\xi_4)=\sum_{n\geq1}b_n(\xi_1)\,[(k+h)^{n}-k^{n}-h^{n}].
\]
The bracket vanishes for $n=1$, equals $2k h$ for $n=2$, and for
$n\geq3$ the binomial theorem gives
\[
  (k+h)^{n}-k^{n}-h^{n}
  =\sum_{j=1}^{n-1}\binom{n}{j}k^{j}h^{\,n-j}
  =k h\,\gamma_n,
  \qquad
  \gamma_n:=\sum_{j=1}^{n-1}\binom{n}{j}k^{j-1}h^{\,n-j-1}\in\cO,
\]
every term of the sum having $1\leq j\leq n-1$. Hence
\[
  \phi(\xi_1)+\phi(\xi_2)-\phi(\xi_3)-\phi(\xi_4)=k h(2b_2(\xi_1)+\sum_{n\geq3}b_n(\xi_1)\gamma_n).
\]
The series converges because $|b_n(\xi_1)\gamma_n|_p\leq|b_n(\xi_1)|_p$
and this latter expression $\to0$
by inserting $h = 1$ into the conclusion of Lemma~\ref{lem:taylor}.

It remains to check that $2b_2(\xi_1)+\sum_{n\geq3}b_n(\xi_1)\gamma_n$ is a unit. By
Lemma~\ref{lem:taylor} we have $b_2(\xi_1)\equiv\kappa\pmod p$ with
$|\kappa|_p=1$, and $|2|_p=1$ as $p$ is odd, so $|2b_2(\xi_1)|_p=1$. On the other hand, $b_n(\xi_1)\in p\cO$ and $\gamma_n\in\cO$ for $n\geq3$, so the
ultrametric inequality bounds the tail by $|\sum_{n\geq3}b_n(\xi_1)\gamma_n|_p \leq p^{-1}<1$. This proves $|\phi(\xi_1)+\phi(\xi_2)-\phi(\xi_3)-\phi(\xi_4)|_p=|k|_p|h|_p$.
\end{proof}

A further ingredient we need is the following bilinear restriction estimate. First we observe:

\begin{lemma} \label{lem:deriv_expand}
Let $\phi$ be as in Definition~\ref{def:good-arc}. Then for all $s, t \in \cO$,
    $$|\phi'(s)-\phi'(t)|_p =|s-t|_p.$$
\end{lemma}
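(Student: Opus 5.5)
The plan is to use the Taylor expansion of Lemma~\ref{lem:taylor}, now applied to $\phi'$ rather than to $\phi$. Fix $s,t\in\cO$ and put $h:=s-t\in\cO$. If $h=0$ both sides vanish, so we assume $h\neq0$.

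First we obtain a convergent expansion of $\phi'(t+h)$ about $t$. Since $\phi(x)=\sum_{m\geq 2} c_m x^m$ with $|c_m|_p\to0$, the termwise derivative
\[
\phi'(x)=\sum_{m\geq2} m c_m x^{m-1}
\]
also converges on $\cO$, because $|mc_m|_p\leq|c_m|_p$. Expanding $(t+h)^{m-1}$ binomially and rearranging, which is legitimate since all terms have absolute value at most $|c_m|_p\to0$, we get
\[
\phi'(t+h)-\phi'(t)=\sum_{n\geq1}\Big(\sum_{m\geq n+1} m\binom{m-1}{n}c_m t^{m-1-n}\Big)h^n .
\]
Using $m\binom{m-1}{n}=(n+1)\binom{m}{n+1}$, the inner coefficient is exactly $(n+1)b_{n+1}(t)$ in the notation of Lemma~\ref{lem:taylor}. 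Alternatively, one can compare coefficients of $h$ in the identity of Lemma~\ref{lem:taylor} written at the base points $t$ and $t+h$. Hence
\[
\phi'(s)-\phi'(t)=h\Big(2b_2(t)+\sum_{n\geq2}(n+1)b_{n+1}(t)h^{n-1}\Big).
\]

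It remains to show that the bracket is a unit. By Lemma~\ref{lem:taylor}, $b_2(t)\equiv\kappa\pmod p$ with $|\kappa|_p=1$, and $|2|_p=1$ because $p$ is odd, so $|2b_2(t)|_p=1$. For $n\geq2$ we have $b_{n+1}(t)\in p\cO$ and $(n+1)h^{n-1}\in\cO$. By the ultrametric inequality the tail therefore has absolute value at most $p^{-1}$, and the bracket has absolute value exactly $1$. This gives $|\phi'(s)-\phi'(t)|_p=|h|_p=|s-t|_p$.

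The only step needing any care is justifying the rearrangement of the double series and identifying its coefficients. Both follow from $|c_m|_p\to0$, exactly as in the proof of Lemma~\ref{lem:taylor}. Note that the argument is structurally the same as that of Lemma~\ref{lem:transversality}.
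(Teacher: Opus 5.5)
Your proof is correct and follows the same route as the paper, which simply asserts that the integral power series expansion makes $(\phi'(s)-\phi'(t))/(s-t)$ a unit congruent to $2\kappa \pmod p$; your expansion of this quotient as $2b_2(t)+\sum_{n\geq2}(n+1)b_{n+1}(t)h^{n-1}$ via Lemma~\ref{lem:taylor} supplies exactly those details. As a minor simplification, factoring $s^{m-1}-t^{m-1}=(s-t)\sum_{j=0}^{m-2}s^jt^{m-2-j}$ termwise reaches the same unit $2c_2+\sum_{m\geq3}mc_m(\cdots)$ without needing to rearrange a double series.
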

\begin{proof}
This is clear for $s = t$, so consider $s\ne t$. In that case, \eqref{eq:normal-form} and the integral power
series expansion show that $(\phi'(s)-\phi'(t))/(s-t)$ is a unit in $\cO$
congruent to $2\kappa \pmod{p}$, so the desired equality follows.
\end{proof}

\begin{proposition} [Bilinear restriction] \label{prop:bilinearrest}
Let $\phi$ be as in Definition~\ref{def:good-arc} and $\delta, \sigma \in p^{- \mathbb N}$. Suppose $J_1, J_2 \subset \Z_p$ are disjoint $p$-adic intervals at a distance $\sigma$ from each other, and suppose $f$ is Fourier supported on a $\delta$ neighborhood of $\{(t,\phi(t)) \colon t \in \Z_p\}$. 
If $p^{-k} := \delta/\sigma \leq \delta^{1/2}$ and $|J_1|, |J_2| \geq p^{-k}$, then
\begin{equation*}%
\int_{\Qp^2} |f_{J_1} f_{J_2}|^2 \leq \frac{1}{4} \int_{\Qp^2} \Big( \sum_{I \in \I_k} |f_I|^2 \Big)^2.
\end{equation*}
\end{proposition}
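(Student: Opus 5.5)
Write $f_{J_i}=\sum_{I\in\I_k,\,I\subset J_i}f_I$; this is legitimate because $|J_i|\geq p^{-k}$, so each $J_i$ is a union of intervals in $\I_k$. Then
\[
f_{J_1}f_{J_2}=\sum_{I\subset J_1}\sum_{I'\subset J_2}f_If_{I'},\qquad
\int|f_{J_1}f_{J_2}|^2=\sum_{I_1,I_1'\subset J_1}\ \sum_{I_2,I_2'\subset J_2}\int f_{I_1}f_{I_2}\overline{f_{I_1'}f_{I_2'}} .
\]
The plan is to show that the only surviving terms are the diagonal ones, $I_1=I_1'$ and $I_2=I_2'$, up to a harmless overlap. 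If so, the left side equals $\sum_{I\subset J_1,\,I'\subset J_2}\int|f_I|^2|f_{I'}|^2$. Expanding the square, the right side is
\[
\int\Big(\sum_{I\in\I_k}|f_I|^2\Big)^2=\sum_{I,I'\in\I_k}\int|f_I|^2|f_{I'}|^2,
\]
and this contains each unordered pair $\{I,I'\}$ with $I\subset J_1$, $I'\subset J_2$ twice. Hence the bound holds with constant $1/2$. Getting the stated $1/4$ needs a slightly sharper count, or a check of the paper's normalization; I expect diagonal dominance with an explicit constant, and I would track the constant at the end.

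The key step is Fourier support disjointness. Each term $f_{I_1}f_{I_2}\overline{f_{I_1'}f_{I_2'}}$ has Fourier support in $\Xi_{I_1}+\Xi_{I_2}-\Xi_{I_1'}-\Xi_{I_2'}$, where $\Xi$ now denotes the $\delta$-neighborhood restricted over each interval. By Plancherel, the integral of this term equals $\widehat{(\cdot)}(0)$, so it vanishes unless $0$ lies in that set. Suppose $0$ lies in it. Then there are $\xi_1\in I_1$, $\xi_3\in I_1'$ (both in $J_1$) and $\xi_2\in I_2$, $\xi_4\in I_2'$ (both in $J_2$) with $\xi_1+\xi_2=\xi_3+\xi_4$, and
\[
|\phi(\xi_1)+\phi(\xi_2)-\phi(\xi_3)-\phi(\xi_4)|_p\leq\delta .
\]
Lemma~\ref{lem:transversality} gives $|\xi_3-\xi_1|_p\,|\xi_4-\xi_1|_p\leq\delta$. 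Since $\xi_1\in J_1$ and $\xi_4\in J_2$, we have $|\xi_4-\xi_1|_p=\sigma$; in the ultrametric setting every cross distance between the two disjoint intervals equals $\sigma$. Therefore $|\xi_3-\xi_1|_p\leq\delta/\sigma=p^{-k}$, which forces $I_1=I_1'$. The identity $\xi_4-\xi_2=\xi_1-\xi_3$ then gives $|\xi_4-\xi_2|_p\leq p^{-k}$, so $I_2=I_2'$.

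The main obstacle is the fine print in this support computation. The first issue is that the neighborhoods have thickness $\delta$ rather than lying exactly on the curve. I handle this by choosing the $\xi_i$ as the first coordinates and absorbing the $\eta$ errors with the ultrametric inequality, which keeps the total error $\leq\delta$. The second issue is that the hypothesis $p^{-k}\leq\delta^{1/2}$ guarantees that the caps $\Xi_I$ for $I\in\I_k$ really are $p^{-k}\times\delta$ parallelograms containing the relevant neighborhood, so that $f_I$ is the genuine Fourier restriction. Once disjointness is established, the cross terms vanish exactly and the argument is pure counting.
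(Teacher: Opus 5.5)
Your orthogonality argument is correct and matches the paper's. You decompose each $f_{J_i}$ into the pieces $f_I$, $I\in\I_k$. You then use Lemma~\ref{lem:transversality} together with $|\xi_4-\xi_1|_p=\sigma$ for $\xi_1\in J_1$, $\xi_4\in J_2$ to force $I_1=I_1'$, and then $I_2=I_2'$. Your explicit use of $\xi_4-\xi_2=\xi_1-\xi_3$ is a clean way to do the step the paper calls ``similarly.'' This reduces the left side exactly to
\[
\int_{\Qp^2}|f_{J_1}f_{J_2}|^2=\int_{\Qp^2} ab,\qquad a:=\sum_{I\in\I_k,\, I\subset J_1}|f_I|^2,\quad b:=\sum_{I'\in\I_k,\, I'\subset J_2}|f_{I'}|^2 .
\]

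The gap is in the final step. The statement asserts the constant $\frac14$, but your counting only yields $\frac12$, and you leave the $\frac14$ as something to be ``tracked at the end.'' The loss comes from discarding the terms of $\big(\sum_{I\in\I_k}|f_I|^2\big)^2$ in which $I$ and $I'$ lie in the same $J_i$, that is, the contributions $a^2+b^2$. These terms are exactly what you need, since pointwise $a^2+b^2\geq 2ab$. Equivalently, $ab\leq\frac14(a+b)^2$, because $(a-b)^2\geq 0$. Since $J_1\cap J_2=\emptyset$, you also have $a+b\leq\sum_{I\in\I_k}|f_I|^2$. Together these give $\int ab\leq\frac14\int\big(\sum_{I\in\I_k}|f_I|^2\big)^2$, which is precisely how the paper finishes; the stated $\frac14$ is correct and involves no normalization subtlety.

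A minor point: your remark that $p^{-k}\leq\delta^{1/2}$ is needed for $f_I$ to be ``the genuine Fourier restriction'' is unnecessary. By definition $f_I$ is the Fourier localization of $f$ to $I\times\Qp$. The orthogonality argument never uses the parallelogram shape of the caps.
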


\begin{proof}  Partition $J_1$ and $J_2$ into $p$-adic intervals of length $p^{-k}$. Write
$f_{J_1} = \sum_{I_1 \subset J_1, I_1 \in \I_k} f_{I_1}$
where $f_{I_1}$ is the frequency localization of $f_{J_1}$ to the $\delta$ neighborhood of $\{(t,\phi(t)) \colon t \in I_1\}$, and similarly $f_{J_2} = \sum_{I_2 \subset J_2, I_2 \in \I_k} f_{I_2}$. 
If $I_1, I_2, I_3, I_4 \in \I_k$ with $I_1, I_3 \subset J_1$, $I_2, I_4 \subset J_2$ and $$\int_{\Qp^2} f_{I_1} f_{I_2} \overline{f_{I_3} f_{I_4}} \ne 0,$$ then there exist $\xi_1 \in I_1$, $\xi_2 \in I_2$, $\xi_3 \in I_3$, $\xi_4 \in I_4$ such that 
$\xi_1 + \xi_2 = \xi_3 + \xi_4$ and $|\phi(\xi_1) + \phi(\xi_2) - \phi(\xi_3) - \phi(\xi_4)|_p \leq \delta.$
From Lemma~\ref{lem:transversality}, this says
\[
|\xi_3 - \xi_1|_p |\xi_4 - \xi_1|_p \leq \delta.
\]
But since $I_1 \subset J_1$, $I_4 \subset J_2$, from the distance between $J_1$ and $J_2$ we get $|\xi_4 - \xi_1|_p \geq \sigma$. Thus the above inequality implies $|\xi_3 -\xi_1|_p \leq \delta/\sigma = p^{-k}$. Since both $\xi_1 \in I_1$, $\xi_3 \in I_3$ and $|I_1| = |I_3| = p^{-k}$, this forces
$I_1 = I_3$. Similarly $I_2 = I_4$. This allows one to expand
\begin{equation*} %
\int_{\Q_p^2} |f_{J_1} f_{J_2}|^2 =  \int_{\Q_p^2} \sum_{\substack{I_1 \in \I_k \\ I_1 \subset J_1}} \sum_{\substack{I_2 \in \I_k \\ I_2 \subset J_2}} |f_{I_1} f_{I_2}|^2.
\end{equation*}
Now use the pointwise inequality $ab \leq \frac{1}{4}(a+b)^2$ to conclude that
\[
\sum_{\substack{I_1 \in \I_k \\ I_1 \subset J_1}} \sum_{\substack{I_2 \in \I_k \\ I_2 \subset J_2}} |f_{I_1} f_{I_2}|^2 \leq \frac{1}{4} \Big( \sum_{I \in \I_k} |f_I|^2 \Big)^2.
\]
\end{proof}

\section{Orthogonal expansion of $F^3$}

\begin{lemma} \label{lem:pmod3}
If $p$ is an odd prime and $p \equiv 2 \pmod 3$, then 
\[
|r^2+rs+s^2|_p = \max\{|r|_p,|s|_p\}^2.
\]
\end{lemma}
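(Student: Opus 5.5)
The plan mirrors the proof of Lemma~\ref{lem:aniso}: reduce to a statement about a unit in $\cO$, and then verify that statement in $\mathbb F_p$ using the congruence condition on $p$.

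First, both sides vanish when $r=s=0$, and both sides are symmetric in $r$ and $s$. So we may assume $s\neq 0$ and $|r|_p\leq|s|_p$. Setting $t:=r/s\in\cO$, we have
\[
r^2+rs+s^2=s^2(t^2+t+1).
\]
It therefore suffices to show that $|t^2+t+1|_p=1$ for every $t\in\cO$. Equivalently, the polynomial $X^2+X+1$ has no root in $\mathbb F_p$.

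The key step is this $\mathbb F_p$ statement. Suppose $\bar t\in\mathbb F_p$ satisfies $\bar t^2+\bar t+1=0$.
\begin{itemize}
\item Then $\bar t^3-1=(\bar t-1)(\bar t^2+\bar t+1)=0$, so $\bar t^3=1$.
\item If $\bar t=1$, the relation gives $3=0$ in $\mathbb F_p$. This is impossible, since $p\equiv 2\pmod 3$ forces $p\neq 3$.
\item Otherwise $\bar t$ has multiplicative order exactly $3$ in $\mathbb F_p^\times$, which has order $p-1$. So $3\mid p-1$, contradicting $p\equiv 2\pmod 3$.
\end{itemize}
Hence $\bar t^2+\bar t+1\neq 0$ in $\mathbb F_p$, so $t^2+t+1$ is a unit in $\cO$.

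It follows that $|r^2+rs+s^2|_p=|s|_p^2=\max\{|r|_p,|s|_p\}^2$.

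There is an alternative route to the key step: complete the square to get $4(X^2+X+1)=(2X+1)^2+3$, then show via quadratic reciprocity that $-3$ is a nonsquare mod $p$. The order argument above is shorter, so I would use it. I do not expect a genuine obstacle; the only care needed is to rule out the case $\bar t=1$ separately, which is exactly where $p\neq 3$ enters.
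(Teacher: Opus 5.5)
Your proof is correct, but the key step differs from the paper's. The paper completes the square, $r^2+rs+s^2=(r+s/2)^2+3(s/2)^2$, and uses quadratic reciprocity to show $\left(\frac{-3}{p}\right)=\left(\frac{2}{3}\right)=-1$. From this it deduces the anisotropy statement $|x^2+3y^2|_p=\max\{|x|_p,|y|_p\}^2$ by a unit argument like yours. It then needs $|1/2|_p=1$ to get $\max\{|r+s/2|_p,|s/2|_p\}=\max\{|r|_p,|s|_p\}$.

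You instead dehomogenize directly to $t^2+t+1$. You observe that a root of $X^2+X+1$ in $\mathbb F_p$, once $\bar t=1$ is excluded using $p\neq 3$, would be a primitive cube root of unity, which Lagrange's theorem forbids since $3\nmid p-1$.

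Your route is more elementary: it uses only the order of $\mathbb F_p^\times$, not quadratic reciprocity. It also never divides by $2$, so it covers $p=2$ and shows the oddness hypothesis is unnecessary for this lemma.

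The paper's route makes explicit that the obstruction is the discriminant $-3$ being a nonsquare, and it parallels the anisotropy argument of Lemma~\ref{lem:aniso} for $x^2+y^2$. The two key facts are equivalent: for $p\neq 3$, $\mathbb F_p$ contains a primitive cube root of unity exactly when $-3$ is a square mod $p$.
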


\begin{proof}
The key is that if $p \equiv 2 \pmod 3$ then $-3$ is not a quadratic residue mod $p$. Quadratic reciprocity gives
\[
\Big( \frac{-3}{p} \Big) = \Big( \frac{-1}{p} \Big) \Big( \frac{3}{p} \Big) = (-1)^{(p-1)/2} (-1)^{(p-1)/2} \Big( \frac{p}{3} \Big) = \Big( \frac{2}{3} \Big) = -1.
\]
As a result, one has 
\begin{equation} \label{eq:consequencemod3}
|x^2 + 3 y^2|_p = \max\{|x|_p,|y|_p\}^2 \quad \text{for any $x, y \in \Q_p$}.
\end{equation}
Indeed, the ultrametric inequality says the left hand side is bounded by the right hand side. If the right hand side of \eqref{eq:consequencemod3} is normalized to $1$, then either $y \in p\cO$, in which case $\max\{|x|_p,|y|_p\} = 1$ implies $|x|_p = 1$ so $|x^2 + 3 y^2|_p = |x^2|_p = 1$; or else $y \in \cO^{\times}$, in which case for $1 > |x^2 + 3 y^2|_p = |(x/y)^2 + 3|_p$ to hold true one must have a non-trivial solution $z \in \mathbb F_p$ with $z^2 + 3 = 0$, contradicting the fact that $-3$ is not a quadratic residue mod $p$. Hence in the second case \eqref{eq:consequencemod3} holds as well. From \eqref{eq:consequencemod3}, if $p$ is also odd, one obtains
\[
|r^2 + rs + s^2|_p = |(r+s/2)^2 + 3(s/2)^2|_p = \max\{|r+s/2|_p,|s/2|_p\}^2 = \max\{|r|_p,|s|_p\}^2,
\]
as desired.
\end{proof}

\begin{lemma}\label{lem:cubic-separation}
Let $p$ be an odd prime with $p\equiv2\pmod3$ and let $\phi$ be as in
Definition \ref{def:good-arc}. Define $\Lambda \colon \cO^2 \to \cO$ by
$$\Lambda(\sigma,\tau):=\tau-3\,\phi(\sigma/3).$$
Then for any $\xi_1,\xi_2,\xi_3\in\cO$, if $(\sigma,\tau) :=(\xi_1,\phi(\xi_1))+(\xi_2,\phi(\xi_2))+(\xi_3,\phi(\xi_3))$,
one has
$|\Lambda(\sigma,\tau)|_p= \max_{i\ne j}|\xi_i-\xi_j|_p^2.$
\end{lemma}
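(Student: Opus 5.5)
Write $\mu := (\xi_1+\xi_2+\xi_3)/3 = \sigma/3$, which lies in $\cO$ since $p \neq 3$, and set $h_i := \xi_i - \mu$, so that $h_1+h_2+h_3 = 0$. Then
\[
\Lambda(\sigma,\tau) = \sum_{i=1}^3 \phi(\mu+h_i) - 3\phi(\mu).
\]
I would expand each term with Lemma~\ref{lem:taylor} at $a=\mu$. Since $|h_i|_p\le 1$, this gives
\[
\Lambda = \sum_{n\ge 1} b_n(\mu)\, s_n, \qquad s_n := h_1^n+h_2^n+h_3^n .
\]
The $n=1$ term vanishes because $s_1 = 0$.

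Next, substitute $h_3 = -(h_1+h_2)$. Then
\[
s_2 = h_1^2+h_2^2+(h_1+h_2)^2 = 2(h_1^2+h_1h_2+h_2^2).
\]
By Lemma~\ref{lem:pmod3}, $|s_2|_p = \max(|h_1|_p,|h_2|_p)^2 =: \rho^2$, where $\rho = \max_i |h_i|_p$ by the ultrametric inequality and $h_3=-(h_1+h_2)$. Since $b_2(\mu)\equiv\kappa$ is a unit and $p$ is odd, $|b_2(\mu)s_2|_p=\rho^2$. For $n\ge 3$, $s_n$ is a homogeneous polynomial of degree $n$ in $h_1,h_2$ with integer coefficients, so $|s_n|_p\le \rho^n\le\rho^2$; combined with $b_n(\mu)\in p\cO$ this gives $|b_n(\mu)s_n|_p\le p^{-1}\rho^2<\rho^2$ (when $\rho>0$). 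Convergence of the tail is as in Lemma~\ref{lem:transversality}, since $|b_n(\mu)|_p\to 0$. By the ultrametric inequality, $|\Lambda|_p=\rho^2$. If $\rho = 0$, then all $\xi_i$ are equal and $\Lambda = 0$, so both sides vanish.

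Finally, I would check that $\rho = \max_{i\ne j}|\xi_i-\xi_j|_p$. We have $\xi_i-\xi_j = h_i-h_j$, so $|\xi_i-\xi_j|_p\le\rho$. Conversely, $3h_1 = (\xi_1-\xi_2)+(\xi_1-\xi_3)$, and $|3|_p=1$ because $p\equiv 2\pmod 3$ forces $p\ne3$; hence $|h_1|_p\le\max_{i\ne j}|\xi_i-\xi_j|_p$, and similarly for $h_2$ and $h_3$. Therefore $|\Lambda(\sigma,\tau)|_p = \max_{i\ne j}|\xi_i-\xi_j|_p^2$.

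The main obstacle is the lower bound on the $n=2$ term, which is exactly where Lemma~\ref{lem:pmod3} enters. The only other points needing care are that $\sigma/3 \in \cO$ (which uses $p\ne 3$) and that the tail estimate $|s_n|_p\le\rho^n$ holds.
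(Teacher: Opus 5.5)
Your proof is correct and follows the paper's argument: center at $\sigma/3$, Taylor expand with Lemma~\ref{lem:taylor}, get the exact size of the quadratic term from Lemma~\ref{lem:pmod3}, and make the tail strictly smaller using $b_n\in p\mathbb{Z}_p$ for $n\ge 3$. The only cosmetic difference is that you apply Lemma~\ref{lem:pmod3} to the centered variables $h_1,h_2$ and then identify $\max_i|h_i|_p$ with $\max_{i\ne j}|\xi_i-\xi_j|_p$, whereas the paper applies it directly to the differences $r=\xi_1-\xi_2$ and $s=\xi_2-\xi_3$.
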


\begin{proof}
Since $p\equiv2\pmod3$ we have $p\neq3$, so $3\in\cO^\times$; thus
$a:=\sigma/3\in\cO$. Put $u_i:=\xi_i-a$, so that $u_1+u_2+u_3=0$ and
$u_i=\frac{1}{3}\sum_{j=1}^3(\xi_i-\xi_j)$ which implies $|u_i|_p\leq\rho:=  \max_{i\ne j}|\xi_i-\xi_j|_p.$
By Lemma \ref{lem:taylor},
\[
  \Lambda(\sigma,\tau)=\sum_{i=1}^3\phi(a+u_i)-3\phi(a)
  =b_2(a)\sum_{i=1}^3u_i^{2}
   +\sum_{n\geq3}b_n(a)\sum_{i=1}^3u_i^{n},
\]
the terms for $n=1$ canceling because $\sum_iu_i=0$. We compute $|\Lambda(\sigma,\tau)|_p$ by analyzing the right hand side.

For the quadratic term $b_2(a)\sum_{i=1}^3u_i^{2}$, write $r:=\xi_1-\xi_2$ and $s:=\xi_2-\xi_3$, so that
$\xi_1-\xi_3=r+s$ and $\rho=\max \{|r|_p,|s|_p\}$. If $\rho=0$, all three parameters are equal and both
sides are zero; hence assume $\rho>0$. From
$3\sum_{i=1}^3 u_i^2 = (u_1 - u_2)^2 + (u_2 - u_3)^2 + (u_3 - u_1)^2 + (u_1 + u_2 + u_3)^2$ we get
\[
  \sum_{i=1}^3u_i^{2}
  =\frac13(r^2+s^2+(r+s)^2)
  =\frac23(r^2+rs+s^2).
\]
Since $p\equiv2\pmod3$, Lemma \ref{lem:pmod3} and $|2/3|_p=1$ give $|\sum_iu_i^2|_p=\rho^{2}$. As $|b_2(a)|_p=1$ by
Lemma \ref{lem:taylor}, the quadratic term has norm exactly $\rho^{2}$.

For the tail $\sum_{n\geq3}b_n(a)\sum_{i=1}^3u_i^{n}$, Lemma~\ref{lem:taylor} gives $|b_n(a)|_p\leq p^{-1}$ for
$n\geq3$, while $|\sum_iu_i^{n}|_p\leq\rho^{n}\leq\rho^{3}$
because $\rho\leq1$. Hence the tail has norm at most
$p^{-1}\rho^{3}<\rho^{2}$, and the ultrametric inequality yields
$|\Lambda(\sigma,\tau)|_p=\rho^{2}$.
\end{proof}

\begin{proposition}[Orthogonal expansion of $F^3$]\label{orthogonalexpansion}
\begin{figure}
\begin{tikzpicture}[
  x=1.75cm,y=1.25cm,
  font=\small,
  line cap=round,
  line join=round,
  every node/.style={text=black},
  support cell/.style={draw=black!75,line width=.35pt},
]

  \path[fill=black!7,draw=none]
    plot[domain=0:3,samples=121]
      (\x,{\x*\x/3+0.040})
    --
    plot[domain=3:0,samples=121]
      (\x,{\x*\x/3-0.040})
    -- cycle;

  \foreach \j in {0,...,11}{
    \pgfmathsetmacro{\a}{0.25*\j}
    \pgfmathsetmacro{\b}{0.25*(\j+1)}
    \path[support cell,fill=black!25]
      (\a,{\a*\a/3+0.160})
      --
      (\b,{\b*\b/3+0.160})
      --
      (\b,{\b*\b/3+0.310})
      --
      (\a,{\a*\a/3+0.310})
      -- cycle;
  }

  \foreach \j in {0,...,5}{
    \pgfmathsetmacro{\a}{0.50*\j}
    \pgfmathsetmacro{\b}{0.50*(\j+1)}
    \path[support cell,fill=black!13]
      (\a,{\a*\a/3+0.350})
      --
      (\b,{\b*\b/3+0.350})
      --
      (\b,{\b*\b/3+0.650})
      --
      (\a,{\a*\a/3+0.650})
      -- cycle;
  }

  \path[
    draw=black,
    fill=black!34,
    line width=.85pt
  ]
    (1.00,{1.00*1.00/3+0.350})
    --
    (1.50,{1.50*1.50/3+0.350})
    --
    (1.50,{1.50*1.50/3+0.650})
    --
    (1.00,{1.00*1.00/3+0.650})
    -- cycle;

  \node[
    font=\normalsize\bfseries,
    anchor=north
  ] (WIlabel) at (0.26,1.78)
    {$W_I: \sigma \in 3I$};

  \draw[black,line width=.55pt]
    (WIlabel.east)
    -- (1.10,{1.10*1.10/3+0.49});

  \draw[black,line width=1.05pt]
    plot[domain=0:3,samples=121]
      (\x,{\x*\x/3});

\node[
  anchor=west,
  font=\footnotesize
] (parabolaLabel) at (1.85,0.55)
  {$\tau=\sigma^2/3$};

\draw[black,line width=.45pt]
  (parabolaLabel.west)
  -- (1.50,{1.50*1.50/3});

  \draw[black!65,line width=.45pt]
    (2.58,{2.58*2.58/3+0.500})
    -- (3.20,2.92);

  \node[
    anchor=west,
    font=\footnotesize
  ] at (3.24,2.92)
    {$\mathcal S_k = \bigcup_{I \in \I_k} W_I$};

  \draw[black!65,line width=.45pt]
    (2.55,{2.55*2.55/3+0.235})
    -- (3.20,2.58);

  \node[
    anchor=west,
    font=\footnotesize
  ] at (3.24,2.58)
    {$\mathcal S_{k+1}$};

\end{tikzpicture}
\caption{Fourier support of $T_I(F)$ as $I$ varies in the case $\phi(t) = t^2$}\label{tiffig}
\end{figure}
Let $p$ be an odd prime with $p \equiv 2 \pmod 3$. Let $F$ be as in Theorem \ref{thm:main}. Recall the telescoping expansion of $F^3$ in Section~\ref{sec:cubic}:
\begin{equation} \label{eq:intro-F3-telescope}
 F^3=\sum_{k=0}^{N-1}\sum_{I\in\I_k}T_I(F)
       +\sum_{K\in\I_N}F_K^3.
\end{equation}
\begin{enumerate}[(a)]
    \item For $0 \leq k < N$, the Fourier support of $\sum_{I \in \I_k} T_I(F)$ is contained in 
    \[
    \mathcal{S}_k := \{(\sigma,\tau) \in \cO^2 \colon |\Lambda(\sigma,\tau)|_p = p^{-2k}\}
    \]
    whereas the Fourier support of $\sum_{K \in \I_N} F_K^3$ is contained in $\{(\sigma,\tau) \in \cO^2 \colon |\Lambda(\sigma,\tau)|_p \leq p^{-2N}\}$.
    In particular, these sums have pairwise disjoint Fourier supports. 
    \item For $0 \leq k < N$, the functions $\{T_I(F)\}_{I \in \I_k}$ have pairwise disjoint Fourier supports. Similarly, the functions $\{F_K^3\}_{K \in \I_N}$ have pairwise disjoint Fourier supports. See Figure \ref{tiffig} for a picture.
    \item We have
    \[
    \|F\|_6^6 = \sum_{k=0}^{N-1} \sum_{I \in \I_k} \|T_I(F)\|_2^2 + \sum_{K \in \I_N} \|F_K\|_6^6.
    \]
\end{enumerate}
\end{proposition}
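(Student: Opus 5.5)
The plan is to compute the Fourier support of each product $F_{J_1}F_{J_2}F_{J_3}$ directly and locate it using Lemma~\ref{lem:cubic-separation}. Since $\widehat{F_{J_1}F_{J_2}F_{J_3}} = \widehat{F_{J_1}}\ast\widehat{F_{J_2}}\ast\widehat{F_{J_3}}$, the Fourier support of the product lies in the sumset $\Xi_{J_1}+\Xi_{J_2}+\Xi_{J_3}$. Each $F_J$ is Fourier supported in the $p^{-2N}$-neighbourhood of $\Gamma$ over $J$. So a point of this sumset has the form
\[
(\sigma,\tau+e), \qquad (\sigma,\tau)=\sum_{i=1}^3(\xi_i,\phi(\xi_i)),
\]
with $\xi_i\in J_i$ and $|e|_p\leq p^{-2N}$. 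Because $\Lambda$ is affine in its second variable, $\Lambda(\sigma,\tau+e)=\Lambda(\sigma,\tau)+e$. Lemma~\ref{lem:cubic-separation} gives $|\Lambda(\sigma,\tau)|_p=\rho^2$, where $\rho=\max_{i\neq j}|\xi_i-\xi_j|_p$. The whole proof then reduces to reading off $\rho$ in each case and using the ultrametric inequality to absorb $e$.

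For part (a), fix $I\in\I_k$ with $k<N$ and a tuple $(J_1,J_2,J_3)\in C_3(I)$.
\begin{itemize}
\item All $\xi_i$ lie in $I$, so $\rho\leq p^{-k}$.
\item The $J_i$ are not all equal, so some pair $\xi_i,\xi_j$ lies in distinct children of $I$. Distinct intervals in $\I_{k+1}$ inside $I$ are separated by distance exactly $p^{-k}$, so $\rho=p^{-k}$.
\end{itemize}
Hence $|\Lambda(\sigma,\tau)|_p=p^{-2k}>p^{-2N}\geq|e|_p$, and the ultrametric inequality gives $|\Lambda(\sigma,\tau+e)|_p=p^{-2k}$. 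Since $T_I(F)=\sum_{C_3(I)}F_{J_1}F_{J_2}F_{J_3}$, this places the Fourier support of $T_I(F)$ in $\mathcal S_k$. For a terminal cap $K$, all $\xi_i\in K$, so $\rho\leq p^{-N}$ and $|\Lambda|_p\leq\max(p^{-2N},|e|_p)\leq p^{-2N}$. The sets $\mathcal S_0,\dots,\mathcal S_{N-1}$ and $\{|\Lambda|_p\leq p^{-2N}\}$ are pairwise disjoint, which proves (a). I should also check that the sumset stays inside $\cO^2$, so that $\Lambda$ is defined there. This holds since $\phi$ maps $\cO$ into $\cO$, and $e\in\cO$.

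For part (b), use the first coordinate. If $\xi_i\in I=a+p^k\cO$ for each $i$, then $\sigma\in 3a+p^k\cO$. Since $3\in\cO^\times$ (as $p\neq3$), this says $\sigma/3\in I$, i.e.\ $\sigma\in 3I$. Multiplication by a unit permutes the intervals of $\I_k$, so the sets $3I$, $I\in\I_k$, are pairwise disjoint. The same argument applies to $\{F_K^3\}_{K\in\I_N}$.

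For part (c), first note that $F$ is bounded, since $\widehat F\in L^2$ has compact support and is therefore in $L^1$. Hence $F^3\in L^2$, and likewise each summand in \eqref{eq:intro-F3-telescope} is in $L^2$. By Plancherel, functions with disjoint Fourier supports are orthogonal in $L^2$. Combining (a) and (b), all summands on the right of \eqref{eq:intro-F3-telescope} are mutually orthogonal. This gives
\[
\|F\|_6^6=\|F^3\|_2^2=\sum_{k=0}^{N-1}\sum_{I\in\I_k}\|T_I(F)\|_2^2+\sum_{K\in\I_N}\|F_K^3\|_2^2,
\]
and $\|F_K^3\|_2^2=\|F_K\|_6^6$. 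The main point needing care is the claim in (a) that distinct children force $\rho=p^{-k}$ exactly, together with absorbing the $p^{-2N}$-thickening $e$. Both follow from the ultrametric structure and the strict inequality $p^{-2k}>p^{-2N}$ for $k<N$.
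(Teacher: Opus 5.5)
Your proposal is correct and follows essentially the same route as the paper. You locate the Fourier support of each product $F_{J_1}F_{J_2}F_{J_3}$ via Lemma~\ref{lem:cubic-separation}, absorbing the $p^{-2N}$ thickening by the ultrametric inequality, for (a); you use the first coordinate $\sigma\in 3I$ for (b) and Plancherel for (c). Your extra checks fill in details the paper leaves implicit, namely that distinct children force $\rho=p^{-k}$ exactly and that $F^3\in L^2$. One small slip: the relevant sumset is that of the actual $p^{-2N}$-thick supports of the $F_{J_i}$, not of the $\Xi_{J_i}$, which for $J_i\in\I_{k+1}$ are $p^{-2(k+1)}$-thick; the argument survives either way since $p^{-2(k+1)}<p^{-2k}$.
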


\begin{proof}
    \begin{enumerate}[(a)]
        \item Fix $0 \leq k < N$. The Fourier support of $\sum_{I \in \I_k} T_I(F)$ is contained in 
        \[
        \bigcup_{I \in \I_k} \bigcup_{(J_1,J_2,J_3) \in C_3(I)} \text{supp}\, \widehat{F_{J_1} F_{J_2} F_{J_3}}.
        \]
        If $I \in \I_k$ and $(J_1,J_2,J_3) \in C_3(I)$, a point $(\sigma,\tau)$ in the Fourier support of $F_{J_1} F_{J_2} F_{J_3}$ can be written in the form $(\xi_1,\phi(\xi_1) + \epsilon_1) + (\xi_2, \phi(\xi_2) + \epsilon_2) + (\xi_3, \phi(\xi_3)+\epsilon_3)$ where $\xi_i \in J_i$ and all $|\epsilon_i|_p \leq p^{-2N}$. Recall $\Lambda(\sigma,\tau) = \tau - 3 \phi(\sigma/3)$ is linear in $\tau$. Thus 
        \[
 \Lambda(\sigma,\tau)
 =\Lambda\!\Big(\sum_i (\xi_i,\phi(\xi_i)) \Big)+\sum_i\epsilon_i,
 \qquad
 |\sum_i\epsilon_i|_p\le p^{-2N}<p^{-2k}.
\]
By Lemma~\ref{lem:cubic-separation} we have 
$|\Lambda(\sigma,\tau)|_p = \max_{i \ne j} |\xi_i-\xi_j|_p^2 = p^{-2k}.$
        Similarly, the Fourier support of $\sum_{K \in \I_N} F_K^3$ is contained in $\bigcup_{K \in \I_N} \text{supp}\, \widehat{F_K^3}$. If $K \in \I_N$ and $(\sigma,\tau)$ is in the Fourier support of $F_K^3$, then the above argument shows that $|\Lambda(\sigma,\tau)|_p \leq p^{-2N}$.
        \item Just observe that if $0 \leq k < N$ and $I \in \I_k$, then the Fourier support of $T_I(F)$ is contained in $$W_I := \{(\sigma,\tau) \in \mathcal{S}_k \colon \sigma \in 3I\},$$ and the sets $3I := I + I + I$ are disjoint as $I$ varies over $\I_k$ since $3$ is a unit in $\cO$. The projection onto the first coordinate of the Fourier support of $F_K^3$ is contained in $3K$, and the sets $3K$ are disjoint as $K$ varies over $\I_N$.
        \item From \eqref{eq:intro-F3-telescope}, we have
        \[
        \|F\|_6^6 = \sum_{k=0}^{N-1} \| \sum_{I \in \I_k} T_I(F) \|_2^2 + \| \sum_{K \in \I_N} F_K^3 \|_2^2 =  \sum_{k=0}^{N-1} \sum_{I \in \I_k} \|T_I(F)\|_2^2 + \sum_{K \in \I_N} \|F_K\|_6^6
        \]
        by successively applying (a) and (b).        
    \end{enumerate}
\end{proof}

\section{Induction on scales}%

Fix $m \geq 1$. The goal will be to prove that for $C_p$ large enough (see \eqref{eq:Cpchoice} below for the precise choice), if at most $2^m$ of the terminal pieces $F_K$ are nonzero and $F$ is normalized so that
\begin{equation} \label{eq:Anormalization}
\sum_{K \in \I_N} \|F_K\|_{\infty}^2 = 1,
\end{equation}
then 
\[
\|F\|_6^6 \leq C_p m \|F\|_2^2.
\]
This will be done by induction on the scale $N$ found in the statement of Theorem \ref{thm:main}. 

The base case $N = 0$ follows from the H\"{o}lder estimate $\norm F_6^6\leq\norm F_\infty^4\norm F_2^2$.
The rest of the proof is devoted to the inductive step. We assume $N \geq 1$ and the conclusion holds with $N$ replaced by all nonnegative integers $< N$. Even if $F$ is normalized as in \eqref{eq:Anormalization}, its constituent pieces $F_I$ may not be. For $0 \leq k \leq N$ and $I \in \I_k$, set
\begin{align*}%
A_{I} := \sum_{K \in \mc{I}_N, K \subset I}\|F_{K}\|_{\infty}^{2}, \quad E_{I} := \|F_{I}\|_{2}^{2}, \quad f_{I} := \begin{cases}
A_{I}^{-1/2}F_{I} & \text{ if } A_{I} > 0\\
0 & \text{ if } A_I = 0.
\end{cases}
\end{align*}
If $A_I = 0$, note that $F_I =0 $ and $E_I = 0$. This also gives the identities
$F_{I} = A_{I}^{1/2}f_I$ and $A_{I}\|f_I\|_{2}^{2} = E_{I}$.
For $A_{I} > 0$, we then have $\sum_{K \in \mc{I}_N, K \subset I}\|(f_I)_K\|_{\infty}^{2} = 1$.
Finally, for non-terminal $I$, we have
$A_{I} = \sum_{J \in C(I)}A_J$ and $E_{I} = \sum_{J \in C(I)}E_{J}$.

\subsection{Broad-narrow decompositions}
Recall that for $0 \leq k < N$ and $I \in \mc{I}_k$, $C_{3}(I)$ is the set
of off-diagonal triples of children of $I$.
 We expand
\begin{align*}
\|T_I(F)\|_2^2 &= \int_{\Q_{p}^2} |  \sum_{(J_1,J_2,J_3) \in C_3(I)} F_{J_1} F_{J_2} F_{J_3} |^2 \leq p^3 \sum_{(J_1,J_2,J_3) \in C_3(I)}  \int_{\Q_{p}^2} |F_{J_1} F_{J_2} F_{J_3}|^2. 
\end{align*}

Fix ${\bf J} = (J_1,J_2,J_3) \in C_3(I)$. Let $\B := \B_{\bf J}$ be the broad set given by
\begin{equation} \label{eq:broadsetdef}
\B = \{x \colon |f_{J_1}(x)|^6 + |f_{J_2}(x)|^6 + |f_{J_3}(x)|^6  \leq 6 p^3 |f_{J_1}(x) f_{J_2}(x) f_{J_3}(x)|^2 \}.
\end{equation}
Let $\cN := \cN_{\bf J}$ be the narrow set $\Q_p^2 \setminus \B_{\bf J}$. 
We estimate
\[
\int_{\Q_{p}^2} |f_{J_1} f_{J_2} f_{J_3}|^2 = \int_{\B} |f_{J_1} f_{J_2} f_{J_3}|^2 + \int_{\cN} |f_{J_1} f_{J_2} f_{J_3}|^2.
\]
On the narrow set 
\[
|f_{J_1}(x) f_{J_2}(x) f_{J_3}(x)|^2 \leq \frac{1}{6 p^3} (|f_{J_1}(x)|^6 + |f_{J_2}(x)|^6 + |f_{J_3}(x)|^6)
\]
so 
\begin{equation} \label{eq:narrow}
\begin{split}
\int_{\cN} |f_{J_1} f_{J_2} f_{J_3}|^2 &\leq \frac{1}{6 p^3} ( \int_{\Q_{p}^2} |f_{J_1}|^6 + \int_{\Q_{p}^2} |f_{J_2}|^6 + \int_{\Q_{p}^2} |f_{J_3}|^6 )\\
&\leq \frac{C_p}{6 p^3} m ( \|f_{J_1}\|_2^2 + \|f_{J_2}\|_2^2 + \|f_{J_3}\|_2^2).
\end{split}
\end{equation}
In this last inequality, we first parabolically rescale back to the unit scale (enabled by Proposition~\ref{prop:rescalinggeom}) and then apply the inductive hypothesis with $N$ replaced by $N-k-1$.

Below we will use the high-low decomposition to prove a broad estimate:

\begin{proposition}[Broad estimate] \label{prop:broad} 
There exists a constant $c_p \geq 1$ such that if $\B = \B_{\bf J}$ is the broad set \eqref{eq:broadsetdef} associated to a triple ${\bf J} \in C_3(I)$ for some $I$, then
\begin{equation} \label{eq:trilinear_broad}
\int_{\B} |f_{J_1} f_{J_2} f_{J_3}|^2 \leq c_p m ( \|f_{J_1}\|_2^2 + \|f_{J_2}\|_2^2 + \|f_{J_3}\|_2^2).
\end{equation}
\end{proposition}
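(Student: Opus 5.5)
We follow the outline in the introduction. First we rescale the triple. By Proposition~\ref{prop:rescalinggeom} applied to $I\in\I_k$, the functions $f_{J_1},f_{J_2},f_{J_3}$ become functions Fourier supported in the $p^{2(k-N)}$ neighborhood of a normalized parabolic arc. The $J_i$ become distinct children of $\cO$, hence lie in different residue classes mod $p$ (at least two are distinct). Each $f_{J_i}$ retains the normalization $\sum_{K\subset J_i}\|(f_{J_i})_K\|_\infty^2=1$. In particular $\|f_{J_i}\|_\infty\le 2^{m/2}$ by Cauchy--Schwarz over the at most $2^m$ nonzero terminal caps. We may therefore assume $k=0$ and relabel the scale $N$.

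On $\B$, the defining inequality forces all three $|f_{J_i}|$ to be comparable up to factors depending only on $p$. Since $\max_i|f_{J_i}|^6\le 6p^3\prod|f_{J_i}|^2$, we get $|f_{J_i}|\sim_p |f_{J_1}f_{J_2}f_{J_3}|^{1/3}$ for each $i$.

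Next we split by amplitude. On $\{|f_{J_1}f_{J_2}f_{J_3}|\le C\}$ we bound $|f_{J_1}f_{J_2}f_{J_3}|^2\lesssim_p |f_{J_i}|^2$ and integrate, giving $\lesssim_p\sum\|f_{J_i}\|_2^2$. The remaining region splits into the dyadic level sets $U_\alpha$ with $1\lesssim\alpha\le 2^{m/2}$, of which there are $O(m)$. It therefore suffices to prove \eqref{eq:intro-level-set}, $\alpha^6|U_\alpha|\lesssim_p\sum\|f_{J_i}\|_2^2$, uniformly in $\alpha$.

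For fixed $\alpha$ we run the compensated high--low scheme.

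\textbf{Pruning.} Set $f=\sum_i f_{J_i}$ and choose $L=c/\alpha$ with $c=c(p)$ small. Build $f_{N+1}=f$, $f_k$, $f_{k,I}$ by wavepacket pruning at height $L$ over the tubes $\T_I$. Define $g_k$ as in \eqref{eq:intro-sq-def}. We need three lemmas:
\begin{enumerate}[(i)]
\item the pruning error bound \eqref{eq:intro-prune-error}, obtained because each discarded wavepacket has modulus greater than $L$, so $|{\rm discarded}|\le L^{-1}|{\rm discarded}|^2$ pointwise, and summing over scales gives $g_{k+1}/L$;
\item the low lemma $P_{k+1}g_k=g_{k+1}$. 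For this, $|f_{k+1,I}|^2=\sum_J|f_{k+1,J}|^2+{}$high cross terms by Lemma~\ref{lem:transversality}-type separation. The discarded squares $|f_{j+1,I}-f_{j,I}|^2$, $j>k$, are already constant at scale $p^{k+1}$. Finally $f_{k+1,J}=f_{k+2,J}-(\text{discard})$, and the retained/discarded wavepackets are disjoint in physical space, so $|f_{k+2,J}|^2=|f_{k+1,J}|^2+|f_{k+2,J}-f_{k+1,J}|^2$;
\item the high lemma \eqref{eq:intro-Hnorm}: the $h_k=g_k-g_{k+1}$ have Fourier support in disjoint annuli $|\cdot|\sim p^{-k}$, giving orthogonality. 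Each $\|h_k\|_2^2$ is bounded by $\sum_I\|f_{k+1,I}\|_4^4\le L^2\sum_I\|f_{k+1,I}\|_2^2$ plus discard terms. Telescoping the $L^2$ mass, since pruning only decreases it, gives $\lesssim L^2\|f\|_2^2$.
\end{enumerate}

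\textbf{High and low sets.} We define $\Omega_k$ by \eqref{eq:intro-new-high-def2} and the low set $\Omega_{\rm low}=\bigcap_{j\ge1}\{g_j\le 2g_N\}$. On $U_\alpha\cap\Omega_{\rm low}$ we have $|f_{J_i}-f_{2,J_i}|\le g_2/L\le 2g_N/L$, with $g_N\lesssim\sum_I|f_{J_i,I}|^2$ at the finest scale bounded via $\|f_K\|_\infty$. We handle this by choosing $c$ so that $g_N/L$ is at most a small multiple of $\alpha$ where needed. If instead $g_N\gtrsim\alpha^2$ there, then $\alpha^6|U_\alpha\cap\Omega_{\rm low}|\lesssim\alpha^2\int g_N^2$. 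For this term we use $\|g_N\|_\infty\lesssim\sum_K\|f_K\|_\infty^2\cdot O(1)=O(1)$, giving $\int g_N^2\lesssim\int g_N\lesssim\|f\|_2^2$, together with $\alpha\lesssim$ a constant after comparing to $L$. This low-set step is where I am least sure of the bookkeeping.

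On $U_\alpha\cap\Omega_k$, we use \eqref{eq:intro-prune-error} together with $g_{k+1}\le 2g_N<g_k$. If the pruning errors are at most $\alpha/2$, then $|f_{k+1,J_i}|\gtrsim\alpha$, so $\alpha^6\lesssim\alpha^2|f_{k+1,J_1}f_{k+1,J_2}|^2$, where $J_1\ne J_2$ is a transverse pair. Otherwise $g_k\gtrsim L\alpha\sim 1$ and a cruder bound applies. Proposition~\ref{prop:bilinearrest}, at the scale $p^{-k}$ determined by the locally-constant size of $\Omega_k$, gives \eqref{eq:intro-bilinear}. Then \eqref{proj} and $g_k\le 2|P_kH|$ give \eqref{eq:intro-sumgk}. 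Summing and using $\alpha L\sim_p1$ yields \eqref{eq:intro-level-set}.

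\textbf{Main obstacle.} I expect the main difficulty to be the exact low lemma $P_{k+1}g_k=g_{k+1}$, which requires the pruning at each scale to be an exact physical-space orthogonal split.
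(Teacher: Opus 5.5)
Your architecture (pruning, compensated square functions, the exact low lemma, the stopping sets $\Omega_k$, bilinear restriction, and \eqref{proj}) is the paper's, but two steps fail as written. The first is the high lemma. Bounding $\|h_k\|_2^2\le\sum_{I\in\I_k}\|f_{k+1,I}\|_4^4\le p^2L^2\|f_{k+1}\|_2^2$ and summing over $k$ gives $p^2L^2(N-1)\|f\|_2^2$. Nothing telescopes here, because $\|f_{k+1,I}\|_2^2=\sum_{J\in C(I)}\|f_{k+1,J}\|_2^2$: the only mass lost between scales is the discarded part $e_{k,I}$, and your bound never sees it. Since $N$ is not controlled by $m$, this loss destroys the diameter-free estimate. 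The missing idea is to keep the subtracted low part. The function $h_k^I:=(1-P_{k+1})|f_{k+1,I}|^2=|f_{k+1,I}|^2-\sum_{J\in C(I)}|f_{k+1,J}|^2$ is orthogonal to $\sum_{J\in C(I)}|f_{k+1,J}|^2$, and the $h_k^I$ are mutually orthogonal in $I$ (via Lemma~\ref{lem:deriv_expand}). Dropping nonnegative cross terms then gives $\|h_k\|_2^2\le\sum_{I\in\I_k}\|f_{k+1,I}\|_4^4-\sum_{J\in\I_{k+1}}\|f_{k+1,J}\|_4^4$. Writing $\|f_{k+1,I}\|_4^4=\|f_{k,I}\|_4^4+\|e_{k,I}\|_4^4$, the $L^4$ norms telescope in $k$, leaving $\sum_{k,I}\|e_{k,I}\|_4^4+\sum_{I\in\I_1}\|f_{1,I}\|_4^4\le p^2L^2\sum_{k,I}\|e_{k,I}\|_2^2+L^2\|f_1\|_2^2$. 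Only at this point does the $L^2$ mass telescope, via $\sum_{k,I}\|e_{k,I}\|_2^2=\|f\|_2^2-\|f_1\|_2^2$.

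The second problem is that the pruning height goes the wrong way. With $L=c/\alpha$ and $c$ small, \eqref{eq:intro-prune-error} together with $g_{k+1}\le 2g_N\le 6$ on $\Omega_k$ only gives an error bound of $6\alpha/c$, which exceeds $\alpha$. So you cannot conclude $|f_{k+1,J_i}|\gtrsim\alpha$, and your ``otherwise'' branch becomes the generic case. The only information available there is $g_{k+1}\gtrsim c$, which via Chebyshev (note $\int g_{k+1}=\|f\|_2^2$) yields merely $\alpha^6|U_\alpha\cap\Omega_k\cap\{g_{k+1}\gtrsim c\}|\lesssim\alpha^6\|f\|_2^2/c$. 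Take $c$ large instead, e.g.\ $L=24p/\alpha$. Then the error on $\Omega_k$ is at most $\alpha/(4p)$, which is less than half of the broadness lower bound $|f_{J_i}|\ge\alpha/(6^{1/3}p)$ on $U_\alpha$, so the ``otherwise'' case is vacuous; meanwhile $\alpha L=24p$ still gives $\alpha^2\|H\|_2^2\lesssim_p\sum_i\|f_{J_i}\|_2^2$. The same choice disposes of the low set $\mathcal{L}$ (your $\Omega_{\rm low}$) without any case split on $g_N$ versus $\alpha^2$. There $|f_{1,J_i}|\le|f_{2,J_i}|\le g_1^{1/2}\le\sqrt6$ and $|f_{J_i}-f_{1,J_i}|\le g_1/L\le\alpha/(4p)$, so $\alpha<\max_i|f_{J_i}|\le\sqrt6+\alpha/4$, which is impossible for $\alpha\ge4$. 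That is, $U_\alpha\cap\mathcal{L}=\emptyset$.
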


Combining the proposition with \eqref{eq:narrow}, we obtain
\[
\int_{\Q_{p}^2}  |f_{J_1} f_{J_2} f_{J_3}|^2 \leq ( \frac{C_p}{6p^3} + c_p ) m ( \|f_{J_1}\|_2^2 + \|f_{J_2}\|_2^2 + \|f_{J_3}\|_2^2).
\]
Hence
\begin{align*}
\|T_I(F)\|_2^2 \leq \, & ( \frac{C_p}{6} + p^3 c_p ) m \sum_{(J_1,J_2,J_3) \in C_3(I)} A_{J_1} A_{J_2} A_{J_3} ( \|f_{J_1}\|_2^2 + \|f_{J_2}\|_2^2 + \|f_{J_3}\|_2^2)  \\
= \, & ( \frac{C_p}{6} + p^3 c_p ) m \sum_{(J_1,J_2,J_3) \in C_3(I)}  ( E_{J_1} A_{J_2} A_{J_3} + A_{J_1} E_{J_2} A_{J_3} + A_{J_1} A_{J_2} E_{J_3} ) \\
= \, & ( \frac{C_p}{2} + 3 p^3 c_p ) m \sum_{(J_1,J_2,J_3) \in C_3(I)}  A_{J_1} A_{J_2} E_{J_3} \\
= \, & ( \frac{C_p}{2} + 3 p^3 c_p ) m ( A_I^2 E_I - \sum_{J \in C(I)} A_J^2 E_J ).
\end{align*}
Applying Proposition \ref{orthogonalexpansion}(c) and observing that the sum in $k$ now telescopes gives
\begin{align*}
\|F\|_6^6 & \leq ( \frac{C_p}{2} + 3 p^3 c_p ) m ( \|F\|_2^2 - \sum_{I \in \I_N} \|F_I\|_{\infty}^4 \|F_I\|_2^2 ) + \sum_{I \in \I_N} \|F_I\|_6^6  \leq ( \frac{C_p}{2} + 3 p^3 c_p ) m \|F\|_2^2.
\end{align*}
The last inequality follows from H\"{o}lder: $\|F_I\|_6^6 \leq \|F_I\|_{\infty}^4 \|F_I\|_2^2$, and the fact that $( \frac{C_p}{2} + 3 p^3 c_p) m \geq 1$.
Recall $c_p$ is the fixed constant from Proposition~\ref{prop:broad}. Induction closes provided
\begin{equation} \label{eq:Cpchoice}
C_p \geq 6 p^3 c_p,
\end{equation} 
because then the right hand side is at most $C_p m \|F\|_2^2$.
    
\section{The broad estimate}%

\subsection{Dyadically pigeonholing the height}\label{dyadicpigeon}

Our goal is now to prove the broad estimate, Proposition~\ref{prop:broad}. By relabeling the $J_i$'s we may assume $J_1 \ne J_2$, and by parabolic rescaling using Proposition~\ref{prop:rescalinggeom} we only need to prove \eqref{eq:trilinear_broad} with $I = \Z_p$.

Fix ${\bf J} := (J_1, J_2, J_3)$ in $\I_1^3$ with $J_1 \ne J_2$. Let $f = \sum_{J \in \{J_1, J_2, J_3\}} f_J$ (so for example if $J_3 = J_1$ then $f = f_{J_1} + f_{J_2}$). 
Thus for $K \subset J_i$ with $A_{J_i} > 0$, we have
$f_{K} = (f_{J_i})_{K} = A_{J_i}^{-1/2}F_K$.
By the normalization of the $f_{J_i}$, we have $\sum_{K \in \mc{I}_N, K \subset J_i}\|f_{K}\|_{\infty}^{2} \leq 1$ with equality whenever $f_{J_i} \neq 0$ and hence
\[
\sum_{K \in \I_N} \|f_K\|_{\infty}^2 \leq 3.
\]

We want to estimate $\int_{\B} |f_{J_1} f_{J_2} f_{J_3}|^2.$
If $|f_{J_1}f_{J_2}f_{J_3}| \leq 4^3$, then $$|f_{J_1}f_{J_2}f_{J_3}|^{2} \leq 4^4 |f_{J_1}f_{J_2}f_{J_3}|^{2/3} \leq \frac{4^4}{3}\sum_{i = 1}^{3}|f_{J_i}|^{2}.$$
Therefore
$$\int_{\{x \in \B \colon |f_{J_1}f_{J_2}f_{J_3}| \leq 4^3\}}|f_{J_1}f_{J_2}f_{J_3}|^{2} \leq \frac{4^4}{3}\sum_{i = 1}^{3}\|f_{J_i}\|_{L^2}^{2}.$$

For dyadic $\alpha \geq 4$, let
\begin{align*}%
U_{\alpha} := \{x \in \B \colon \alpha^3 < |f_{J_1}(x) f_{J_2}(x) f_{J_3}(x)| \leq (2\alpha)^3\}.
\end{align*}
Since at most $2^m$ terminal functions $f_K$ are nonzero, Cauchy-Schwarz and our normalization for $f_{J_i}$ give
\[
|f_{J_i}| \leq 2^{m/2} ( \sum_{\substack{K \in \I_N \\ K \subset J_i}} |f_K|^2 )^{1/2} \leq 2^{m/2}.
\]
Hence there are only $O(m)$ dyadic values of $\alpha \geq 4$ for which $U_{\alpha}$ is nonempty. The desired estimate \eqref{eq:trilinear_broad} will follow if one proves
\begin{equation} \label{eq:levelsetestSect5}
\alpha^6 |U_{\alpha}| \lesssim_p ( \|f_{J_1}\|_2^2 + \|f_{J_2}\|_2^2 + \|f_{J_3}\|_2^2).
\end{equation}
We note that this sum over $\alpha$ is the only contribution to the log in Theorem~\ref{thm:main}.

\subsection{Pruning}

From now on, fix a dyadic $\alpha \geq 4$. Let $L > 0$ be a height to be chosen later.

Define $f_{N+1,K} := f_K$, $f_{N,K} := f_{N+1,K} 1_{|f_{N+1,K}| \leq L}$ for $K \in \I_N$, and recursively, for $1 \leq k < N$ and $I \in \I_k$, define 
\begin{align}\label{fkidef}
f_{k+1,I} := \sum_{J \in C(I)} f_{k+1,J}, \quad f_{k,I} := f_{k+1,I} 1_{|f_{k+1,I}| \leq L}.
\end{align}
The lemma below implies that for $I \in \mc{I}_k$, $f_{k + 1, I}$ and $f_{k, I}$ have Fourier supports in $\Xi_I$.

\begin{lemma}
Let $I \in \mc{I}_k$ with $k \leq N$. If $G$ is a function with Fourier transform supported on $\Xi_{I}$, then $G \, 1_{|G| \leq L}$ also has
Fourier transform supported on $\Xi_{I}$.
\end{lemma}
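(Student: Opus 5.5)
The approach is the standard $p$-adic locally constant argument. Since $\Xi_I$ is a parallelepiped, we can arrange that modulating by a character and changing variables linearly turns it into a subgroup. A function whose Fourier transform is supported on a compact open subgroup is constant on cosets of the dual subgroup, and any pointwise function of it, such as $G\,1_{|G|\le L}$, inherits this. That property in turn forces the Fourier support back into the same subgroup.

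Concretely, I fix $a\in I$ and write, as in the discussion after Lemma~\ref{lem:taylor},
\[
\Xi_I=(a,\phi(a))+\Xi_I^0,\qquad \Xi_I^0:=\{(\xi,\eta)\colon |\xi|_p\le p^{-k},\ |\eta-\phi'(a)\xi|_p\le p^{-\min\{2k,2N\}}\}.
\]
The set $\Xi_I^0$ is a compact open additive subgroup of $\Qp^2$, being the image of a box under the invertible linear map $(\xi,\eta')\mapsto(\xi,\eta'+\phi'(a)\xi)$. I then set $G_0(x):=\overline{\chi(x\cdot(a,\phi(a)))}\,G(x)$. This function has $\widehat{G_0}$ supported on $\Xi_I^0$, and since the modulation has modulus $1$,
\[
|G_0|=|G|,\qquad G\,1_{|G|\le L}=\chi(x\cdot(a,\phi(a)))\,G_0\,1_{|G_0|\le L}.
\]
It therefore suffices to show that $G_0 1_{|G_0|\le L}$ has Fourier support in $\Xi_I^0$; modulating back by $\chi(x\cdot(a,\phi(a)))$ then translates the support to $\Xi_I$.

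The key fact I need is this: for a compact open subgroup $H\subset\Qp^2$ with annihilator $H^\perp=\{x\colon \chi(x\cdot\xi)=1\ \forall \xi\in H\}$ (here $H^\perp=T_I^0$), a function $g$ has $\widehat g$ supported in $H$ if and only if $g$ is invariant under translation by $H^\perp$. For the forward direction, if $\widehat g$ is supported in $H$, then for $y\in H^\perp$ we get $g(x+y)=\int_H \widehat g(\xi)\chi((x+y)\cdot\xi)\,d\xi=g(x)$. For the converse, if $g$ is $H^\perp$-invariant, then $\widehat g$ satisfies $\widehat g(\xi)=\overline{\chi(y\cdot\xi)}\widehat g(\xi)$ for all $y\in H^\perp$. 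For $\xi\notin (H^\perp)^\perp=H$ some $y$ has $\chi(y\cdot\xi)\ne1$, which forces $\widehat g(\xi)=0$.

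Applying this to $G_0$ shows that $G_0$ is $T_I^0$-invariant. Since $G_0 1_{|G_0|\le L}$ is a pointwise function of $G_0$, it is $T_I^0$-invariant as well, and the converse direction gives Fourier support in $\Xi_I^0$.

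The only delicate point is the duality $(H^\perp)^\perp=H$ together with making sense of $\widehat{\cdot}$ for the truncated function. The truncation satisfies $|G_0 1_{|G_0|\le L}|\le|G_0|$, so it remains in $L^2$ and the Fourier transform is defined in the $L^2$ sense. The translation-invariance argument then holds almost everywhere. Pontryagin duality for the self-dual group $\Qp^2$ under the pairing $\chi(x\cdot\xi)$ supplies $(H^\perp)^\perp=H$ for closed subgroups, as in \cite[Section 2]{GuoLiYung2023}.
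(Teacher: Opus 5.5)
Your proof is correct and is essentially the paper's argument. The paper observes that $|G|$ is constant on each tile $T\in\mathbb T_I$ (the translates of $T_I^0$), so the truncation keeps or discards whole wavepackets $G\,1_T$, each of which is Fourier supported in $\Xi_I$. You package the same locally constant property globally, as $T_I^0$-invariance of the demodulated function $G_0$ together with the duality $(T_I^0)^\perp=\Xi_I^0$, which has the small advantage of also justifying the fact the paper takes for granted: that each retained wavepacket has Fourier support in $\Xi_I$.
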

\begin{proof}
This is just an application of wavepacket decomposition.
Since $|G|$ is constant on each translate $T \in \mathbb T_I$, amplitude truncation either retains or discards each wavepacket $G \, 1_T$. Each retained wavepacket has Fourier support in $\Xi_I$, so $G \, 1_{{|G|\le L}}$ does as well.
\end{proof}

For $1 \leq k \leq N$ and $I \in \I_k$, let 
\begin{align}\label{ekidef}
e_{k,I} := f_{k+1,I} - f_{k,I}
\end{align}
be the discarded part. We have 
\[
f_{k+1,I} = f_{k,I} + e_{k,I}
\]
as a decomposition into two functions with disjoint support; in particular, we have a pointwise identity
\begin{equation} \label{eq:ptwiseortho}
|f_{k+1,I}|^2 = |f_{k,I}|^2 + |e_{k,I}|^2.
\end{equation}
Note that the Fourier support of $|f_{k+1,I}|^2$, $|f_{k,I}|^2$, and $|e_{k,I}|^2$ is contained in $\Xi_{I} - \Xi_{I} = \{(\xi, \eta): |\xi|_{p} \leq p^{-k}, |\eta - \phi'(a)\xi|_{p} \leq p^{-2k}\}$.

We will write $f_k := \sum_{I \in \I_k} f_{k,I}$ for $1 \leq k \leq N$, and $f_{N+1} := f$. 
For $1 \leq k \leq N$, define the residual square function
\begin{align}\label{rkdef}
r_k := \sum_{I \in \I_k} |e_{k,I}|^2.
\end{align}
The compensated square function is
\begin{align}\label{corrsquare}
g_k := \sum_{I \in \I_k} |f_{k+1,I}|^2 + \sum_{j=k+1}^N r_j.
\end{align}
In particular, $g_N = \sum_{K \in \I_N} |f_K|^2 \leq 3$.

Recall $f_{k+1,J_i}$ is the frequency localization of $f_{k+1}$ to $J_i \times \Q_p$. For $i = 1,2,3$ and $0 \leq k < N$ we have a pointwise bound
\begin{align}\label{pruneapprox}
|f_{J_i} - f_{k+1,J_i}| \leq \sum_{j=k+1}^N \sum_{\substack{I \in \I_j \\ I \subset J_i}} |e_{j,I}| \leq \frac{1}{L} \sum_{j=k+1}^N r_j \leq \frac{1}{L} g_{k+1}.
\end{align}
Indeed, by \eqref{corrsquare}, the last inequality
is equivalent to showing $r_{k + 1} \leq \sum_{I \in \mc{I}_{k + 1}}|f_{k + 2, I}|^{2}$ which itself follows from \eqref{eq:ptwiseortho} and \eqref{rkdef}.

\subsection{Stopping time at every point}
Decompose $\Q_{p}^2 = \mathcal{L} \cup \bigcup_{k = 1}^{N - 1}\Om_k$ where
\begin{align*}%
\Om_k &:= \{x: g_{k}(x) > 2g_{N}(x)\} \cap \bigcap_{j = k + 1}^{N - 1}\{x: g_{j}(x) \leq 2g_{N}(x)\}
\end{align*}
and
\begin{align*}%
\mathcal{L} &:= \bigcap_{j = 1}^{N - 1}\{x: g_{j}(x) \leq 2g_{N}(x)\}.
\end{align*}
Choose our pruning height to be $$L := 24p/\alpha.$$

For $x \in \Om_k$, notice that $g_{k+ 1}(x) \leq 6$.
Indeed $g_{k + 1}(x) \leq 2g_{N}(x) \leq 6$ if $k + 1 \leq N - 1$. Meanwhile, if $k = N - 1$, $g_{k + 1}(x) = g_{N}(x) \leq 3$.
Additionally, if $x \in \mc{L}$, we have
$g_{1}(x) \leq 6$.
By \eqref{pruneapprox} and our choice of $L$, we have
\begin{align}
|f_{J_i}(x) - f_{k + 1, J_i}(x)| &\leq \frac{6}{L} \leq \frac{\alpha}{4p}, \quad\quad x \in \Om_k\label{approx1}\\
|f_{J_i}(x) - f_{1, J_i}(x)| &\leq \frac{6}{L} \leq \frac{\alpha}{4p}, \quad\quad x \in \mc{L} \label{approx2}
\end{align}
for each $i = 1, 2, 3$.

For $x \in U_{\alpha}$, we will prove that $|f_{J_i}(x)| \sim_p \alpha$ for each $i=1,2,3$. More precisely, we have
\begin{align}\label{indbd}
\frac{\alpha}{6^{1/3}p} \leq |f_{J_i}(x)| \leq 2 \cdot 6^{1/6}p^{1/2}\alpha.
\end{align}
Indeed, for $x \in U_\alpha \subset \mathcal B$, broadness gives
  \[
  |f_{J_i}(x)|
  \leq 6^{1/6}p^{1/2}
  |f_{J_1}(x)f_{J_2}(x)f_{J_3}(x)|^{1/3}.
  \]
  Applying this estimate to the other two factors in the product also gives
\[
|f_{J_i}(x)| \geq \frac{|f_{J_1}(x) f_{J_2}(x) f_{J_3}(x)|}{(6 p^3 |f_{J_1}(x) f_{J_2}(x) f_{J_3}(x)|^2)^{2/6} } = \frac{1}{6^{1/3} p} |f_{J_1}(x) f_{J_2}(x) f_{J_3}(x)|^{1/3}.
  \]
Since $\alpha < |f_{J_1}(x)f_{J_2}(x)f_{J_3}(x)|^{1/3} \leq 2\alpha$ on $U_\alpha$, \eqref{indbd} follows.

Since $\alpha \geq 4$, $U_{\alpha} \cap \mc{L} = \emptyset$. Indeed, for $x \in U_{\alpha} \cap \mc{L}$, we have from \eqref{eq:ptwiseortho} that $|f_{1, J_i}(x)|^{2} \leq |f_{2, J_i}(x)|^2 \leq g_1(x) \leq 6$
and by \eqref{approx2}, we have
\begin{align}\label{fjiupper}
|f_{J_i}(x)| \leq |f_{1, J_i}(x)| + \alpha/(4p) \leq \sqrt{6} + \alpha/4
\end{align}
for each $i$.
On the other hand since $x \in U_{\alpha}$,
$\alpha< |f_{J_1}(x)f_{J_2}(x)f_{J_3}(x)|^{1/3}$. Taking
the geometric mean of \eqref{fjiupper} gives
$\alpha < \sqrt{6} + \alpha/4$ which is not possible
since $\alpha \geq 4$.

For $x \in U_{\alpha} \cap \Om_k$, \eqref{indbd}
gives
\begin{align*}
\frac{\alpha}{4p} = \frac{6^{1/3}}{4} \cdot \frac{\alpha}{6^{1/3}p} < \frac{1}{2}|f_{J_i}(x)|.
\end{align*}
Combining this with \eqref{approx1} gives
\begin{align*}
|f_{k + 1, J_i}(x)| \geq |f_{J_i}(x)| - \frac{\alpha}{4p} > \frac{1}{2}|f_{J_i}(x)|
\end{align*}
for each $i=1,2$. Together with $|f_{J_3}(x)|^{2} \leq 8p\alpha^2$ from the upper bound in \eqref{indbd}, we have
\begin{align*}
\alpha^6 < |f_{J_1}(x)f_{J_2}(x)f_{J_3}(x)|^{2} \leq 128p\alpha^{2}|f_{k + 1, J_1}(x)f_{k + 1, J_2}(x)|^{2}
\end{align*}
for $x \in U_{\alpha} \cap \Om_k$.
Disjointness of the $\Om_k$ immediately gives
\begin{equation*}
\alpha^6 |U_{\alpha}| \leq 128 p \alpha^2 \sum_{k=1}^{N-1} \int_{\Omega_k} |f_{k+1,J_1} f_{k+1,J_2}|^2.
\end{equation*}

\subsection{Applying bilinear restriction}
For $1 \leq k < N$, $\Omega_k$ is a union of squares $Q$ of side length $p^k$. We apply Proposition~\ref{prop:bilinearrest} with $\sigma = 1$ and $\delta = p^{-k}$ to the localization $f_{k+1} 1_Q$ to each such square $Q$. %
Summing over all such squares $Q$ that make up $\Omega_k$ gives
\[
\int_{\Omega_k} |f_{k+1,J_1} f_{k+1,J_2}|^2 \leq \frac{1}{4} \int_{\Omega_k} g_k^2.
\]
As a result,
\begin{equation} \label{eq:high-low-integrals}
\alpha^6 |U_{\alpha}| \leq 
32 p \alpha^2  \sum_{k=1}^{N-1} \int_{\Omega_k} g_k^2.
\end{equation}
To establish \eqref{eq:levelsetestSect5} we estimate the right hand side of \eqref{eq:high-low-integrals}.

\subsection{The (new) High and Low Lemmas}

The compensated square function obeys the following new low lemma.

\begin{lemma}[Low Lemma] \label{lem:low}
Let $P_k$ be the frequency projection onto $B_{\rho_k}$, the ball of radius $\rho_k := p^{-k}$ centered at the origin.
Then for $1\leq k<N$,
\begin{equation*}%
 P_{k+1} g_k=g_{k+1}.
\end{equation*}
\end{lemma}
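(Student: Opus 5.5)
We expand $g_k$ and $g_{k+1}$ and compare them term by term. By definition \eqref{corrsquare},
\[
g_k-\sum_{j=k+2}^N r_j=\sum_{I\in\I_k}|f_{k+1,I}|^2+r_{k+1}, \qquad g_{k+1}-\sum_{j=k+2}^N r_j=\sum_{J\in\I_{k+1}}|f_{k+2,J}|^2 .
\]
So it suffices to prove two things:
\begin{enumerate}[(i)]
\item each $r_j$ with $j\ge k+1$, and each $|f_{k+2,J}|^2$ with $J\in\I_{k+1}$, is Fourier supported in $B_{\rho_{k+1}}$, and hence fixed by $P_{k+1}$;
\item $P_{k+1}\big(\sum_{I\in\I_k}|f_{k+1,I}|^2+r_{k+1}\big)=\sum_{J\in\I_{k+1}}|f_{k+2,J}|^2$.
\end{enumerate}

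For (i), recall that $|e_{j,I}|^2$ and $|f_{k+2,J}|^2$ are Fourier supported in $\Xi_I-\Xi_I$. This set is $\{|\xi|_p\le p^{-j},\ |\eta-\phi'(a)\xi|_p\le p^{-2j}\}$, which lies in $B_{p^{-j}}$ because $|\phi'(a)|_p\le1$. For $j\ge k+1$ this is inside $B_{\rho_{k+1}}$.

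For (ii), we use \eqref{eq:ptwiseortho} at level $k+1$: for $J\in\I_{k+1}$,
\[
|f_{k+2,J}|^2=|f_{k+1,J}|^2+|e_{k+1,J}|^2 .
\]
Summing over $J$, the right side of (ii) equals $\sum_{J\in\I_{k+1}}|f_{k+1,J}|^2+r_{k+1}$. Since $r_{k+1}$ is already low frequency by (i), (ii) reduces to
\[
P_{k+1}\sum_{I\in\I_k}|f_{k+1,I}|^2=\sum_{J\in\I_{k+1}}|f_{k+1,J}|^2 .
\]
By the recursive definition \eqref{fkidef}, $f_{k+1,I}=\sum_{J\in C(I)}f_{k+1,J}$. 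Expanding the square gives
\[
|f_{k+1,I}|^2=\sum_{J\in C(I)}|f_{k+1,J}|^2+\sum_{J\ne J'\in C(I)}f_{k+1,J}\overline{f_{k+1,J'}} .
\]
The diagonal terms are low frequency by the computation in (i). So the whole lemma reduces to the claim that $P_{k+1}$ annihilates every cross term $f_{k+1,J}\overline{f_{k+1,J'}}$ with $J\ne J'$ siblings in $\I_{k+1}$.

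This cross-term claim is the main step. The Fourier support of $f_{k+1,J}\overline{f_{k+1,J'}}$ lies in $\Xi_J-\Xi_{J'}$, whose first coordinate $\xi-\xi'$ has $\xi\in J$, $\xi'\in J'$. Since $J,J'$ are distinct children of the same $I\in\I_k$, we have $|\xi-\xi'|_p=p^{-k}>p^{-(k+1)}$. Hence $\Xi_J-\Xi_{J'}$ is disjoint from $B_{\rho_{k+1}}$, and since $P_{k+1}$ is multiplication by the indicator $1_{B_{\rho_{k+1}}}$ on the Fourier side, it kills these terms exactly.

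One detail needs care. The pruned functions $f_{k+1,J}$ and $e_{k+1,J}$ must have Fourier support in $\Xi_J$ with $J\in\I_{k+1}$. This is supplied by the truncation lemma stated just before the present lemma, together with the recursive construction; for $k+1=N$ it is the terminal truncation. Once that is in place, every step above is an exact equality, which is why the lemma holds with equality rather than inequality.
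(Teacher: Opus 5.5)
Your proof is correct and follows essentially the same route as the paper's. Both expand $|f_{k+1,I}|^2$ over siblings, where the cross terms have $|\xi|_p = p^{-k}$ and are killed by $P_{k+1}$. Both then apply \eqref{eq:ptwiseortho} at level $k+1$ to rewrite $\sum_{J}|f_{k+1,J}|^2$ as $\sum_J |f_{k+2,J}|^2 - r_{k+1}$, and use that $P_{k+1}$ fixes every $r_j$ with $j \geq k+1$.
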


\begin{proof}
For $I\in\I_k$, writing $f_{k + 1, I} = \sum_{J \in C(I)}f_{k + 1, J}$ and expanding the square gives
\begin{equation} \label{eq:oldlow}
 P_{k+1}|f_{k+1,I}|^2
 =\sum_{J\in C(I)}|f_{k+1,J}|^2.
\end{equation}
Indeed, the diagonal terms have Fourier support in
$B_{\rho_{k+1}}$, while every cross term has Fourier support
outside that ball. 
(So far, this is exactly the argument for the old low lemma; see, for example \cite[Lemma 7.1]{GuoLiYung2023}.) Taking \eqref{eq:ptwiseortho} with $k$ replaced by $k+1$ shows that the right hand side of \eqref{eq:oldlow} is equal to 
$\sum_{J \in C(I)} (|f_{k+2,J}|^2 - |e_{k+1,J}|^2).$
Since $\supp\widehat r_j\subset B_{\rho_j}$, $P_{k+1}$ fixes $r_j$ for $j\geq k+1$. This shows
\begin{align*}
P_{k+1} g_k = \sum_{J \in \I_{k+1}} |f_{k+2,J}|^2 - r_{k+1} + \sum_{j={k+1}}^N r_j = g_{k+1}.
\end{align*}
\end{proof}

For $1 \leq k < N$, write the high-frequency part of $g_k$ as 
\begin{align*}%
h_k := (I-P_{k+1}) g_k = (I-P_{k+1}) \sum_{I \in \I_k} |f_{k+1,I}|^2.
\end{align*}
From the above Low Lemma, we have
$g_k = g_{k+1} + h_k $
which iterates to an $L^2$ orthogonal decomposition
\begin{equation} \label{eq:hjtelescope}
g_k = g_N + \sum_{j=k}^{N-1} h_j.
\end{equation}

\begin{lemma}[High Lemma]\label{lem:high}
Let 
\begin{align*}%
H := \sum_{k=1}^{N-1} h_k.
\end{align*}
Then
\begin{equation*}%
 \norm H_2^2 = \sum_{k=1}^{N-1} \|h_k\|_2^2 \leq p^2 L^2
 \sum_{i=1}^3\norm{f_{J_i}}_2^2.
\end{equation*}
\end{lemma}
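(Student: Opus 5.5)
The plan is to establish the identity $\|H\|_2^2=\sum_k\|h_k\|_2^2$ first, since it is purely a Fourier-support statement, and then to look for a per-scale bound on $\|h_k\|_2^2$ whose sum over $k$ telescopes. I have not found that telescoping bound; the second half of this proposal is a plan for the step I expect to be the main obstacle, not a proof.

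\emph{Orthogonality.} For $I\in\I_k$, the Fourier support of $|f_{k+1,I}|^2$ lies in $\Xi_I-\Xi_I\subset B_{\rho_k}$. Hence $\widehat{h_k}=(1-\1_{B_{\rho_{k+1}}})\widehat{g_k}$ is supported in the annulus $B_{\rho_k}\setminus B_{\rho_{k+1}}$. The annuli for $k=1,\dots,N-1$ are pairwise disjoint, so Plancherel gives $\|H\|_2^2=\sum_k\|h_k\|_2^2$. The Low Lemma~\ref{lem:low} also gives $g_k=g_{k+1}+h_k$ with $g_{k+1}=P_{k+1}g_k\perp h_k$, so
\[
\|h_k\|_2^2=\|g_k\|_2^2-\|g_{k+1}\|_2^2 .
\]

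\emph{Single-scale structure.} I would write $h_k$ explicitly. Expanding $f_{k+1,I}=\sum_{J\in C(I)}f_{k+1,J}$ and using the proof of \eqref{eq:oldlow}, the diagonal terms are exactly the $P_{k+1}$-part. Therefore
\[
h_k=\sum_{I\in\I_k}\sum_{\substack{J\neq J'\\ J,J'\in C(I)}}f_{k+1,J}\overline{f_{k+1,J'}}=:\sum_{I\in\I_k}X_I .
\]
The Fourier support of $X_I$ lies in $\{|\xi|_p=\rho_k,\ |\eta-\phi'(a_I)\xi|_p\le\rho_{2k}\}$. By Lemma~\ref{lem:deriv_expand}, if $|a_I-a_{I'}|_p=\rho_j$ with $j<k$, these sets are separated by $\rho_j\rho_k>\rho_{2k}$. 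So distinct $I\in\I_k$ give disjoint supports, and
\[
\|h_k\|_2^2=\sum_{I\in\I_k}\|X_I\|_2^2 .
\]
The available pointwise facts are $|f_{k+1,J}|\le L$ (these are retained pieces at scale $k+1$) and $|X_I|\le p\sum_J|f_{k+1,J}|^2$. Together they give only $\|X_I\|_2^2\le p^2L^2\sum_{J\in C(I)}\|f_{k+1,J}\|_2^2$. This has exactly the shape $p^2L^2\,\|\cdot\|_2^2$ claimed in the lemma, but summing it over $k$ loses a factor $N$.

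\emph{Main obstacle: summing over scales without loss.} I need a per-scale bound that telescopes. I would try two routes, in this order.

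The first route uses
\[
\|X_I\|_2^2=\int|f_{k+1,I}|^4-\int\Big(\sum_{J\in C(I)}|f_{k+1,J}|^2\Big)^2 ,
\]
together with $|f_{k+1,J}|^2=|f_{k+2,J}|^2-|e_{k+1,J}|^2$ from \eqref{eq:ptwiseortho}. The aim is to compare $\Phi_k:=\sum_{I\in\I_k}\int|f_{k+1,I}|^4$ with $\Phi_{k+1}$. The pruning corrections should then be absorbed using $|f_{k+1,J}|\le L<|e_{k+1,J}|$ on the support of $e_{k+1,J}$, with the aim of a bound of the form $\|h_k\|_2^2\le\Phi_k-\Phi_{k+1}+p^2L^2\,(\text{mass pruned at scale }k+1)$. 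The total pruned mass is at most $\sum_i\|f_{J_i}\|_2^2$, and $\Phi$ telescopes, with the top term controlled by $p^2L^2\|f\|_2^2$.

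If the $e$-terms in $\Phi$ cannot be controlled, the second route is to telescope $\sum_k\|h_k\|_2^2=\|g_1\|_2^2-\|g_N\|_2^2$ directly. It would then remain to bound $\|g_1\|_2^2$, or rather its part not cancelled by the low-frequency terms $r_j$ common to all $g_k$, by $p^2L^2\sum_i\|f_{J_i}\|_2^2$.

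This cross-scale accounting is the step I expect to be hardest. In either route, the task is to show that the large discarded pieces $e_{j,I}$ contribute only through their $L^2$ mass and not their $L^4$ mass.
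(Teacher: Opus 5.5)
You have a genuine gap. Your two structural steps are correct and agree with the paper: the annular Fourier supports give $\|H\|_2^2=\sum_k\|h_k\|_2^2$, and the pieces $X_I$ (the paper's $h_k^I$) are orthogonal within a scale by Lemma~\ref{lem:deriv_expand}. But the proposal stops exactly where the content of the lemma lies, so there is no proof of the loss-free bound. The missing argument is short; it is your route~1 carried out. First, bound the subtracted term from below by dropping the nonnegative cross terms: $\int(\sum_{J\in C(I)}|f_{k+1,J}|^2)^2\ge\sum_{J\in C(I)}\|f_{k+1,J}\|_4^4$. Second, since $f_{k,I}$ and $e_{k,I}$ have disjoint supports, $\|f_{k+1,I}\|_4^4=\|f_{k,I}\|_4^4+\|e_{k,I}\|_4^4$. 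Together these give
\[
\|h_k\|_2^2\le\sum_{I\in\I_k}\|e_{k,I}\|_4^4+\Psi_k-\Psi_{k+1},\qquad \Psi_k:=\sum_{I\in\I_k}\|f_{k,I}\|_4^4 .
\]
So the quantity that telescopes is the $L^4$ mass of the \emph{pruned} pieces. The top term satisfies $\Psi_1\le L^2\|f_1\|_2^2$ because $|f_{1,I}|\le L$.

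The ingredient you have pointing the wrong way is the treatment of the discarded pieces. The lower bound $|e_{k+1,J}|>L$ on their support does not help. What is needed is the upper bound $|e_{k,I}|\le|f_{k+1,I}|\le\sum_{J\in C(I)}|f_{k+1,J}|\le pL$ for $k<N$, which holds because every child has already been pruned at height $L$. So the $e_{k,I}$ do enter through their $L^4$ mass, but this is harmless: $\|e_{k,I}\|_4^4\le p^2L^2\|e_{k,I}\|_2^2$. By $L^2$ orthogonality, $\sum_{k=1}^{N-1}\sum_{I\in\I_k}\|e_{k,I}\|_2^2\le\|f\|_2^2-\|f_1\|_2^2$. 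Hence $\|H\|_2^2\le p^2L^2(\|f\|_2^2-\|f_1\|_2^2)+L^2\|f_1\|_2^2\le p^2L^2\|f\|_2^2\le p^2L^2\sum_i\|f_{J_i}\|_2^2$.

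A caution if you keep your unpruned $\Phi_k$ instead. The last step $k=N-1$ then produces the terminal discarded pieces $e_{N,K}$. These need not satisfy $|e_{N,K}|\le pL$, since the $f_K$ were never pruned beforehand. That term is cancelled exactly by $-\Phi_N$, so you must not discard $\Phi_N$. Your route~2 only restates the problem as bounding $\|g_1\|_2^2-\|g_N\|_2^2$ and needs the same input.
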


\begin{proof}
Since $\supp\widehat g_k\subset B_{\rho_k}$, the functions $h_k$ have Fourier supports contained in pairwise disjoint annuli, leading to $\|H\|_2^2 = \sum_{k=1}^{N-1} \|h_k\|_2^2$. For $1 \leq k < N$ and $I\in\I_k$ let
\[
 h_{k}^{I}:=(I-P_{k+1})|f_{k+1,I}|^2.
\]
The Fourier support of $h_{k}^{I}$ is contained in the set 
$\{(\xi,\eta) \colon |\xi|_p = p^{-k}, |\eta - \phi'(a) \xi|_p \leq p^{-2k}\}$
where $a$ is any point in $I$. These sets are disjoint as $I$ varies over $\I_k$. If $I_1, I_2 \in \I_k$ and $I_1 \ne I_2$, then for $a_1 \in I_1$, $a_2 \in I_2$ we have $|\phi'(a_1) - \phi'(a_2)|_p = |a_1-a_2|_p \geq p^{-(k-1)}$ by Lemma~\ref{lem:deriv_expand}. 
Suppose there was a point $(\xi, \eta) \in (\supp \wh{h_{k}^{I_1}}) \cap (\supp \wh{h_{k}^{I_2}})$.
Then
\[
p^{-(2k-1)} \leq |\phi'(a_1)-\phi'(a_2)|_p |\xi|_p = |(\eta - \phi'(a_1) \xi) - (\eta - \phi'(a_2) \xi)|_p \leq p^{-2k},
\]
a contradiction. Thus the  Fourier supports of the $h_{k}^{I}$ are disjoint as $I$ varies over $\I_k$.

The old Low Lemma
\eqref{eq:oldlow} allows us to rewrite
\[
h_{k}^{I} = |f_{k+1,I}|^2 - \sum_{J \in C(I)} |f_{k+1,J}|^2.
\]
For each $I \in \I_k$, the Fourier support of $h_{k}^{I}$ is disjoint from the Fourier support of $\sum_{J \in C(I)} |f_{k+1,J}|^2$. In fact, the former is contained in $|\xi|_p = p^{-k}$, while the latter is contained in $|\xi|_p \leq |(\xi,\eta)|_p \leq p^{-(k+1)}$. 

The two geometric facts about Fourier supports imply
\begin{align*}
\|h_k\|_2^2 = \sum_{I \in \I_k} \|h_{k}^{I}\|_2^2 = \sum_{I \in \I_k} ( \|f_{k+1,I}\|_4^4 - \| \sum_{J \in C(I)} |f_{k+1,J}|^2 \|_2^2 ).
\end{align*}
Expanding the $L^2$ norm of the sum over $J$ and discarding the nonnegative cross terms, 
one obtains $$\| \sum_{J \in C(I)} |f_{k+1,J}|^2 \|_2^2 \geq \sum_{J \in C(I)}\nms{f_{k + 1, J}}_{4}^{4}.$$
Combining the above two centered equations with integrating the pointwise equality $|f_{k+1,I}|^4 = |f_{k,I}|^4 + |e_{k,I}|^4$ gives
\begin{align*}
\|h_k\|_2^2 
&\leq  \sum_{I \in \I_k} ( \|f_{k+1,I}\|_4^4 - \sum_{J \in C(I)} \| f_{k+1,J}\|_4^4 ) = \sum_{I \in \I_k} \|e_{k,I}\|_4^4 + \sum_{I \in \I_k} \|f_{k,I}\|_4^4 - \sum_{J \in \I_{k+1}} \| f_{k+1,J}\|_4^4.
\end{align*}
The last two terms form a telescoping sum in $k$. Hence
\[
\sum_{k=1}^{N-1} \|h_k\|_2^2 \leq \sum_{k=1}^{N-1} \sum_{I \in \I_k} \|e_{k,I}\|_4^4 + \sum_{I \in \I_1} \|f_{1,I}\|_4^4.
\]
For $I \in \mc{I}_1$, observe that $|f_{1, I}| = |f_{2, I}1_{|f_{2, I}| \leq L}| \leq L$
and for $I \in \mc{I}_k$, observe that
\begin{align*}
|e_{k, I}| \leq |f_{k + 1, I}| \leq \sum_{J \in C(I)}|f_{k + 1, J}| = \sum_{J \in C(I)}|f_{k + 2, J}1_{|f_{k + 2, J}| \leq L}| \leq pL
\end{align*}
for $1 \leq k < N$.
Therefore we have
\[
\|H\|_2^2 \leq p^2 L^2 \sum_{k=1}^{N-1} \sum_{I \in \I_k} \|e_{k,I}\|_2^2 + L^2 \sum_{I \in \I_1} \|f_{1,I}\|_2^2.
\]
Using $|e_{k,I}|^2 = |f_{k+1,I}|^2 - |f_{k,I}|^2$
and integrating, together with $L^2$ orthogonality, one obtains
\begin{equation*} %
\sum_{k=1}^{N} \sum_{I \in \I_k} \|e_{k,I}\|_2^2 = \sum_{k=1}^{N} (\|f_{k+1}\|_2^2 - \|f_k\|_2^2) = \|f\|_2^2 - \|f_1\|_2^2.
\end{equation*}
Finally since $\sum_{I \in \I_1} \|f_{1,I}\|_2^2 = \|f_1\|_2^2$, we have
\begin{align*}
\|H\|_2^2 &\leq p^2 L^2 (\|f\|_2^2 - \|f_1\|_2^2) + L^2 \|f_1\|_2^2 \leq p^2 L^2 \|f\|_2^2%
\end{align*}
and the conclusion follows from the definition of $f$ at the beginning of Section \ref{dyadicpigeon}.
\end{proof}

Recall we were estimating the right hand side of \eqref{eq:high-low-integrals}. On $\Omega_k$, we have
\[
g_k < 2 (g_k - g_N) = 2 \sum_{j=k}^{N-1} h_j = 2 P_k H.
\]
The first inequality follows from the definition of $\Omega_k$, the first equality from \eqref{eq:hjtelescope}, and the second equality from the observation that $\supp\wh{h}_j \subset B_{\rho_j}\bs B_{\rho_{j + 1}}$. 
As a result, for every $k = 1, 2, \ldots, N - 1$,
\[
\int_{\Omega_k} g_k^2 \leq 4 \int_{\Omega_k} |P_k H|^2 \leq 4\int_{\Om_k}|H|^2
\]
where in the last inequality we made use of \eqref{proj} and then Plancherel.
Using disjointness of $\Omega_k$, together with the High Lemma~\ref{lem:high} and our choice of $L$, we obtain
\begin{equation*}%
 \sum_{k=1}^{N-1}\int_{\Omega_k}g_k^2
 \leq4\norm H_2^2
 \leq 4 p^2 L^2\sum_{i=1}^3\norm{f_{J_i}}_2^2  = 4 \cdot 24^2 p^4 \alpha^{-2} \sum_{i=1}^3\norm{f_{J_i}}_2^2.
\end{equation*}
Plugging this back into \eqref{eq:high-low-integrals}, we obtain the desired conclusion \eqref{eq:levelsetestSect5}.

\end{document}